\newtheorem{te}{Theorem}
\newtheorem{defin}{Definition}
\newtheorem{os}{Remark}
\newtheorem{lem}{Lemma}
\newtheorem{coro}{Corollary}
\numberwithin{equation}{section}
\begin{document}

\title[Fractional advection diffusion equations]{Wright functions governed by fractional directional derivatives and fractional advection diffusion equations}

\author{Mirko D'Ovidio} 
\address{Dipartimento di Scienze di Base e Applicate per l'Ingegneria, Sapienza University of Rome}
\email{mirko.dovidio@uniroma1.it}

\keywords{Directional derivative, Wright function, stable subordinator, fractional diffusion, advection equation, translation operator.}

\date{\today}

\subjclass[2000]{60J35, 60J70}

\begin{abstract}
We consider fractional directional derivatives and establish some connection with stable densities. Solutions to  advection equations involving fractional directional derivatives are presented and some properties investigated. In particular we obtain solutions written in terms of Wright functions by exploiting operational rules involving the shift operator. We also consider fractional advection diffusion equations involving fractional powers of the negative Laplace operator and directional derivatives of fractional order and discuss the probabilistic interpretations of solutions.
\end{abstract}

\maketitle

\section{Introduction}
In this work we study the solutions to fractional Cauchy problems involving the operator
\begin{equation}
(\mathbf{a} \cdot \nabla) =  \sum_{k=1}^{n} a_{k}\, \partial_{x_k} \label{dirder}
\end{equation}
where $\nabla = (\partial_{x_1}, \ldots,  \partial_{x_n})$ and $\|\mathbf{a}\|^2=a_1^2 + \cdots + a_n^2=1$. In particular, we are interested in studying the solutions to the fractional advection equation
\begin{equation*} 
\frac{\partial^\beta \psi}{\partial t^\beta} + ( \mathbf{a}\cdot \nabla )^\alpha \psi = 0, \quad \alpha, \beta \in (0,1]
\end{equation*}
for the scalar field $\psi=\psi(\mathbf{x}, t)$ with constant velocity $\mathbf{a}$. By exploiting operational rules involving the shift operator, we obtain some representations of the scalar field $\psi$ in terms of Wright functions. From this, we arrive at the fractional advection (convection) diffusion equation
\begin{equation*}
\frac{\partial^\beta w}{\partial t^\beta} = -(-\triangle)^\vartheta w - (\mathbf{a}\cdot \nabla)^\alpha w, \quad \vartheta, \alpha, \beta \in (0,1] 
\end{equation*}
(where $\triangle = \nabla \cdot \nabla$ is the Laplace operator) and the transport equation
\begin{equation*}
\frac{\partial^\beta }{\partial t^\beta} \mathsf{w}= - (-\mathbf{a}\cdot \nabla)^\alpha  \mathsf{w} - \lambda (I - K)  \mathsf{w}
\end{equation*}
where $K$ is the Frobenius-Perron operator corresponding to some transformation. For the above equations we study the solutions from a probabilistic point of view.

Before starting with an overview of the work we introduce some notations which turn out to be useful further in the text:
\begin{itemize}
\item $u=u(\mathbf{x},t)$ is a general solution to a general boundary value problem,
\item $\widetilde{u}$ is the Laplace transform of $u$,
\item $\widehat{u}$ is the Fourier transform of $u$,
\item $\mathbf{a} \cdot \mathbf{x} = \sum_{k=1}^n a_k x_k$ and $\| \mathbf{x}\|^2 = \mathbf{x} \cdot \mathbf{x}$,
\item $\frac{\partial^\beta}{\partial t^\beta}$ is the Dzerbayshan-Caputo time-fractional derivative,
\item $\partial_t^\beta$ and $\partial_x^\alpha$ are the Riemann-Liouville time- and space-fractional derivatives,
\item $\partial_x=\partial / \partial x$ and $\partial_t=\partial /\partial t$,
\item $h_\alpha$ is the density law of the stable subordinator $\mathfrak{H}^{\alpha}_t$, $t>0$,
\item $l_\beta$ is the law of $\mathfrak{L}^{\beta}_t$, $t>0$ which is the inverse to $\mathfrak{H}^{\beta}$.
\end{itemize}

After some preliminaries and auxiliary results, we introduce, in Section \ref{secII}, the rule
\begin{equation}
e^{\zeta \partial_x} f(x) = f(x+\zeta) \label{opRuleIntro}
\end{equation}
where the shift operator $\exp \zeta \partial_x$ plays an important role in studying fractional powers of $\partial_x$ and \eqref{dirder}. By exploiting such a rule we define the fractional directional derivative
\begin{equation*}
(\mathbf{a}\cdot \nabla)^\alpha = \int_0^\infty  \left(1- e^{-s (\mathbf{a}\cdot \nabla)}\right) d\gamma(s)
\end{equation*}
where
\begin{equation*}
d\gamma(s)/ds = \frac{\alpha \, s^{-\alpha-1}}{\Gamma(1-\alpha)}
\end{equation*}
is the L\'evy measure of a stable subordinator. In Section \ref{secIII}, we study fractional equations of the form
\begin{equation}
\left( \frac{\partial^\beta}{\partial t^\beta} + (\mathbf{a}\cdot \nabla)^\alpha\right) u_{\alpha, \beta}(\mathbf{x}, t) = 0, \quad (\mathbf{x},t) \in \mathbb{R}^n_{+}\times (0, +\infty) \label{pdeIntro1}
\end{equation}
for $\alpha, \beta \in (0, 1]$. In particular we show that 
\begin{equation*}
u_{\alpha, \beta}(\mathbf{x}, t) = \mathcal{U}^{\alpha}_{\beta}(\mathbf{a}\cdot \mathbf{x}, t)
\end{equation*}
is the solution to the equation \eqref{pdeIntro1} subject to the initial and boundary conditions
\begin{equation*}
u_{\alpha, \beta}(\mathbf{x}, 0) = \delta(x_1) \times \cdots \times \delta(x_n), \quad u_{\alpha, \beta}(\mathbf{0}, t) = 0
\end{equation*}
where 
\begin{equation}
\mathcal{U}^{\alpha}_{\beta}(x, t) = \mathbb{E}^{x} \delta(\mathfrak{H}^\alpha_{\mathfrak{L}^\beta_t}), \quad x \in \mathbb{R},\; t>0 \label{funLamp}
\end{equation}
is the density law of the composition involving a stable subordinator $\mathfrak{H}^\alpha_t$ and an inverse process $\mathfrak{L}^\beta_t$. If $\alpha=\beta \in (0,1)$, then the function \eqref{funLamp} becomes the Lamperti's law (see for example \citet{Lanc, Dov2}) and the solution to \eqref{pdeIntro1}, for $\alpha = \beta$, can be explicitly written as
\begin{equation*}
u_{\beta, \beta}(\mathbf{x}, t) = \frac{\sin \beta \pi}{\pi} \frac{(\mathbf{a} \cdot \mathbf{x})^{\beta -1}\, t^{\beta} }{(\mathbf{a} \cdot \mathbf{x})^{2\beta} + 2 (\mathbf{a} \cdot \mathbf{x})^{\beta} t^\beta \cos \beta \pi + t^{2\beta}}, \quad \beta \neq 1.
\end{equation*}
This is the case in which the density law of the composition of processes appearing in \eqref{funLamp} coincides with the law of the ratio of two independent stable subordinators ${_j\mathfrak{H}^\beta_t}$, $j=1,2$, that is
\begin{equation}
\mathfrak{H}^\beta_{\mathfrak{L}^\beta_t} \stackrel{law}{=} t \times {_1\mathfrak{H}^\beta_t} / {_2\mathfrak{H}^\beta_t}, \quad t>0. \label{ratioSS}
\end{equation}

A special case of \eqref{pdeIntro1}, for $\beta \in (0,1)$, is the time-fractional equation
\begin{equation*}
\Big( \partial^\beta_t + (\mathbf{a}\cdot \nabla) \Big)v_\beta (\mathbf{x}, t)= 0, \quad (\mathbf{x},t) \in \mathbb{R}^n_+ \times (0,+\infty)
\end{equation*}
subject to the initial and boundary conditions
\begin{equation*}
v_\beta(\mathbf{x}, 0)=\delta(\mathbf{x}), \qquad v_\beta(\mathbf{0}, t)=t^{-n\beta}_{+}
\end{equation*}
for which we show that the solution can be written as
\begin{equation*}
v_\beta(\mathbf{x}, t) = \frac{1}{t^{n\beta}} W_{-\beta, 1-n \beta}\left(- \frac{\mathbf{a} \cdot \mathbf{x}}{t^{\beta}} \right)
\end{equation*}
where
\begin{align}
W_{\mu, \rho}(z) = & \frac{1}{2\pi i} \int_{Ha} \zeta^{-\rho} e^{\zeta + z\zeta^{-\mu}}d\zeta \notag \\ 
= & \sum_{k=0}^{\infty} \frac{z^k}{k!\, \Gamma(\mu k + \rho)}, \quad \mu >-1,\; \rho \in \mathbb{C}, \; z \in \mathbb{C} \label{wright}
\end{align}
($Ha$ denotes the Hankel path) is the Wright function which has been introduced and investigated by the British mathematician E. Maithland Wright in a series of paper starting from 1933. By normalizing $v_\beta$ we obtain the $n$-dimensional probability law $p_\beta$ whose one-dimensional marginals coincide with the density law of the hitting time $\mathfrak{L}^\beta_t = \inf \{ s\geq 0\,: \, \mathfrak{H}^\beta_s \notin (0,t) \}$, $t>0$ which is the inverse to the stable subordinator $\mathfrak{H}^\beta_t$, $t>0$. In particular, we get that
\begin{equation}
p_\beta(\mathbf{x}, t) = \frac{Pr\{ {_1X^\beta_t} \in dx_1, \ldots ,{_nX^\beta_t} \in dx_n \}}{dx_1\cdots dx_n}
\end{equation}
is the law of the process $\mathscr{X}_\beta(t) = ({_1X^\beta_t}, \ldots ,{_nX^\beta_t})$ which can be regarded as the inverse of
\begin{align*}
\mathscr{H}_\beta(\mathbf{x})  & = \sqrt[\beta]{a_1}\times {_1 \mathfrak{H}^\beta_{x_1}}+ \ldots + \sqrt[\beta]{a_n}\times {_n \mathfrak{H}^\beta_{x_n}}
\end{align*}
in the sense that
\begin{align*}
Pr\{ \mathscr{X}_\beta(t) < \mathbf{x} \} = Pr\{ \mathscr{H}_\beta(\mathbf{x}) >t \}.
\end{align*}
Due to the fact that $\mathfrak{H}^\beta_t$, $t>0$, has non-negative increments, the multi-parameter process $\mathscr{H}_\beta(\mathbf{x})$, $\mathbf{x}\in \mathbb{R}^n_+$, possesses non-decreasing paths.

For the time-fractional equation
\begin{equation*}
\Big( \partial^\beta_t + (\mathbf{a}\cdot \nabla) \Big) \mathfrak{U}_\beta^{n}(\mathbf{x}, t) = 0, \quad \beta \in (0,1), \; (\mathbf{x},t) \in \mathbb{R}^n_{+} \times (0,+\infty)  
\end{equation*}
subject to the initial and boundary conditions
\begin{equation*}
\mathfrak{U}_\beta^{n}(\mathbf{x}, 0)=\delta(\mathbf{x}), \qquad \mathfrak{U}_\beta^{n}(\mathbf{0}, t)=t^{n - n\beta -1}_{+}, \quad n \in \mathbb{N}
\end{equation*}
we obtain that
\begin{equation*}
\mathfrak{U}_\beta^{n}(\mathbf{x}, t) = t^{n-n\beta -1}W_{-\beta, n-n\beta}\left( -\frac{\mathbf{a}\cdot \mathbf{x}}{t^{\beta}} \right)
\end{equation*}
and, for $n,m \in \mathbb{N}$, we show that
\begin{equation*}
\mathfrak{U}_\beta^{n}(\mathbf{x}, t) * \mathfrak{U}_\beta^{m}(\mathbf{y}, t) = \mathfrak{U}_\beta^{n+m}(\mathbf{x} + \mathbf{y}, t)
\end{equation*}
where $*$ stands for the Laplace convolution over $t$. Furthermore, for $n=1$, we obtain the density law of $\mathfrak{L}^\beta_t$, $t>0$.

In Section \ref{secIV}, we present the solutions to the time-fractional problems involving the operator
\begin{equation} 
|\mathbf{a} \cdot \nabla |^{\alpha} = \left( (\mathbf{a} \cdot \nabla)^2 \right)^{\frac{\alpha}{2}} =\left( \sum_{i=0}^{n} \sum_{j=0}^n a_{ij}\frac{\partial^2}{\partial x_i \partial x_j} \right)^{\frac{\alpha}{2}}
\end{equation} 
and, for $\alpha=2$, we find out some connection with Gaussian laws. In particular, we show that the solution to
\begin{equation*} 
\left( \frac{\partial^\beta}{\partial t^\beta} - (\mathbf{a} \cdot \nabla)^2 \right) g(\mathbf{x}, t)=0, \quad  \beta \in (0,1), \;  (\mathbf{x},t) \in \mathbb{R}^n \times (0, +\infty)
\end{equation*}
subject to the initial condition $g(\mathbf{x}, 0)=\delta(\mathbf{x})$ is written in terms of the Wright function \eqref{wright} as follows
\begin{equation}
g(\mathbf{x}, t) = \frac{1}{t^{\beta/2}}W_{-\frac{\beta}{2}, 1- \frac{\beta}{2}} \left(- \frac{|\mathbf{a} \cdot \mathbf{x}|}{t^{\beta/2}} \right). \label{sol2Ordintro}
\end{equation}
Furthermore, formula \eqref{sol2Ordintro} can be written as
\begin{equation}
g(\mathbf{x}, t) = \int_0^\infty \frac{e^{-\frac{(\mathbf{a} \cdot \mathbf{x})^2}{4s}}}{\sqrt{4\pi s}}l_{\beta}(s, t)ds 
\end{equation}
where
\begin{equation}
l_\beta(s, t) = \frac{1}{t^\beta} W_{-\beta, 1-\beta}\left(- \frac{s}{t^\beta} \right) \label{llaw}
\end{equation}
is the law of the inverse process $\mathfrak{L}^\beta_t$, $t>0$. 

Finally, we arrive at the fractional advection diffusion equation. We show that, for $\vartheta \in (0,1)$, $\alpha, \beta \in (0,1)$ and $\mathbf{a}\in \mathbb{R}^n$ such that $\| \mathbf{a}\| =1$, the solution $w=w(\mathbf{x},t)$ to the fractional equation
\begin{equation*}
\frac{\partial^\beta w}{\partial t^\beta} = -(- \triangle)^\vartheta w - (\mathbf{a}\cdot \nabla)^\alpha w
\end{equation*}
for $(\mathbf{x}, t) \in \mathbb{R}^n \times (0, +\infty)$, is given by
\begin{equation*}
w(\mathbf{x}, t) = \int_0^\infty dz \int_{\mathbb{R}^n_+} d\mathbf{s} \,\mathcal{T}_{2\vartheta}(\mathbf{x} - \mathbf{s}, z) \, h_\alpha(\mathbf{a}\cdot \mathbf{s}, z)\, l_\beta(z, t) 
\end{equation*}
where $\mathcal{T}_{2\vartheta}(\mathbf{x}, t)$  is the law of the isotropic stable L\'evy process $\mathbf{S}_{2\vartheta}(t)$, $h_\alpha(x, t)$ is the law of the stable subordinator $\mathfrak{H}^\alpha_t$ and $l_\beta(x, t)$ is the law of the inverse process $\mathfrak{L}^\beta_t$.  The distribution $w$ represents the density law of a subordinated $\mathbb{R}^n$-valued stable process with stable subordinated  drift given by
\begin{equation*}
\mathbf{W}(t) = \mathbf{S}_{2\vartheta}(\mathfrak{L}^\beta_t) + \mathbf{a}\, \mathfrak{H}^\alpha_{\mathfrak{L}^\beta_t}, \quad t>0.
\end{equation*} 
We also show that the stochastic solution to the transport equation 
\begin{equation}
\frac{\partial^\beta \mathsf{w}}{\partial t^\beta} = - (\mathbf{a}\cdot \nabla)^\alpha \mathsf{w} - \lambda \left( I - K \right) \mathsf{w}
\end{equation}
where $\alpha, \beta \in (0, 1]$ and $K=e^{-\mathbf{1}\cdot \nabla}$ is the shift operator, is given by
\begin{equation*}
\mathbf{Y}_t = \mathbf{N}(\mathfrak{L}^\beta_t) + \mathbf{a}\, \mathfrak{H}^\alpha_{\mathfrak{L}^\beta_t}.
\end{equation*}
where $\mathbf{N}_t$ is a Poisson process.  If $\mathbf{a}=\mathbf{0}$, then $K=B$ becomes the backward operator and therefore $\mathsf{w}$ becomes the probability of the fractional Poisson process $\mathbf{N}(\mathfrak{L}^\beta_t)$ (see for example \citet{OB09EJP} and the references therein).

\section{Preliminaries and auxiliary results}
For $m-1 < \alpha < m$, $m \in \mathbb{N}$, we define the Weyl's fractional derivative
\begin{equation*}
\partial_x^\alpha f(x) = \partial_x^m \left[ \frac{x^{m-\alpha-1}}{\Gamma(m-\alpha)} * f(x) \right]= \frac{\partial_x^m}{\Gamma(m-\alpha)} \int_{-\infty}^x \frac{f(s)\, ds}{(x-s)^{\alpha + m-1}}, \quad x\in \mathbb{R}
\end{equation*}
which is named Riemann-Liouville derivative if $x>0$ whereas, for $m-1 < \beta < m$, we define the Dzerbayshan-Caputo fractional derivative as
\begin{equation*}
\frac{\partial^\beta f}{\partial t^\beta}(t) = \left[\frac{t^{m-\beta-1}}{\Gamma(m-\beta)} * \partial_t^m f(t)\right] = \frac{1}{\Gamma(m-\beta)} \int_{0}^{t} \frac{\partial_s^m f(s)\, ds}{(t-s)^{\alpha +m -1}}, \quad t>0.
\end{equation*}
The symbol $*$ stands for the Fourier convolution and for $\alpha=\beta=1$, both fractional derivatives become ordinary derivatives. Furthermore, we recall the following connection between the above fractional derivatives (\citep{SKM93})
\begin{equation}
\frac{\partial^\beta f}{\partial t^\beta}(t) = \partial^\beta_t f(t) - \sum_{k=0}^{m-1} \partial^k_t f(t)\Big|_{t=0^+}\, \frac{t^{k-\beta}}{\Gamma(k-\beta +1)} \label{connectionFD}
\end{equation}
which leads to the following relation between Laplace transforms
\begin{equation*}
\widetilde{\frac{\partial^\beta f}{\partial t^\beta}}(\lambda) = \widetilde{\partial^\beta_t f}(\lambda) - \sum_{k=0}^{m-1}  \partial^k_t f(t)\Big|_{t=0^+}\, \lambda^{\beta -k -1}
\end{equation*}
where $\widetilde{\partial^\beta_t f}(\lambda) = \lambda^\beta \widetilde{f}(\lambda)$.

Our aim in this section is to introduce the operational solutions to the fractional equation
\begin{equation} 
\left( \frac{\partial^\beta}{\partial t^\beta}  + \partial_x^\alpha\right) \mathcal{U}^{\alpha}_{\beta} = 0, \quad x \geq 0,\, t>0
\label{eqPrelim1}
\end{equation}
subject to the initial and boundary conditions $\mathcal{U}^{\alpha}_{\beta}(x, 0) = \varphi_0(x)$ and $\mathcal{U}^{\alpha}_{\beta}(0, t)=0$ with $\alpha, \beta \in (0, 1]$. The problem  is to find an explicit form for the convolution
\begin{equation}
\mathcal{U}^{\alpha}_{\beta}(x, t) = E_{\beta}(-t^\beta \partial_x^\alpha)\varphi_0(x) \label{explCOnv}
\end{equation}
where
\begin{equation}
E_{\beta}(z) = \frac{1}{2\pi i}\int_{Ha} \frac{\zeta^{\beta -1} e^\zeta}{\zeta^\beta -z}d\zeta =  \sum_{k=0}^\infty \frac{z^k}{\Gamma(\beta k +1)}, \quad \Re\{ \beta \} >0, \; z \in \mathbb{C} \label{mittag-leffler}
\end{equation}
($Ha$ is the Hankel path) is the Mittag-Leffler function. We first observe that $u(z)=E_\beta(-w^\beta z)H(z)$, where $w>0$ and $H(z)$ is the Heaviside step function, is the fundamental solution to the fractional relaxation equation
\begin{equation}
\frac{\partial^\beta u}{\partial z^\beta}(z) + w^\beta \, u(z)=0.\label{relaxEq}
\end{equation}
For $\varphi_0=\delta$, we get $\widetilde{\varphi_0}(\xi) = 1$ and thus, being $\widetilde{\partial_x^\alpha \mathcal{U}^{\alpha}_{\beta}}(\xi, t)=\xi^\alpha \widetilde{\mathcal{U}^{\alpha}_{\beta}}(\xi, t)$ the Laplace transform of the Riemann-Liouville derivative of $\mathcal{U}^{\alpha}_{\beta}$, we obtain the Laplace transform
\begin{equation}
\widetilde{\mathcal{U}^{\alpha}_{\beta}}(\xi, t) = E_\beta(-t^\beta \xi^\alpha) .\label{lapeqPrelim1}
\end{equation}
As we can immediately check, from \eqref{relaxEq} or the well-known fact that $E_\beta$ is an eigenfunction for the Dzerbayshan-Caputo derivative, we reobtain
\begin{equation*}
\frac{\partial^\beta}{\partial t^\beta} \widetilde{\mathcal{U}^{\alpha}_{\beta}}(\xi, t) = -\xi^\alpha \, \widetilde{\mathcal{U}^{\alpha}_{\beta}}(\xi, t) = \int_0^\infty e^{-\xi x} \left( - \partial^\alpha_x \mathcal{U}^{\alpha}_{\beta}(x, t) \right) dx.
\end{equation*}
From \eqref{lapeqPrelim1} and the fact that (see for example \citet{SKM93})
\begin{equation*} E_\beta(-z^\beta a^\alpha) = \frac{\sin \beta \pi}{\pi} \int_0^\infty \frac{w^{\beta -1} \,e^{- w z a^{\alpha/\beta}} \, dw}{w^{2\beta} + 2 w^{\beta} \cos \beta \pi + 1}, \quad a>0,
\end{equation*}
we can write
\begin{align}
\widetilde{\mathcal{U}^{\alpha}_{\beta}}(\xi, t) = & \frac{\sin \beta \pi}{\pi}  \int_0^\infty e^{- x \xi^{\alpha / \beta}} \frac{x^{\beta -1}\, t^{\beta}}{x^{2\beta} + 2 x^{\beta} t^\beta \cos \beta \pi + t^{2\beta}} dx. \label{aboveCML}
\end{align}
If we formally rearrange \eqref{explCOnv} from \eqref{aboveCML} we get that
\begin{equation}
\mathcal{U}^{\alpha}_{\beta}(x, t) =  \frac{\sin \beta \pi}{\pi}  \int_0^\infty ds\,  \frac{s^{\beta -1}\, t^{\beta}}{s^{2\beta} + 2 s^{\beta} t^\beta \cos \beta \pi + t^{2\beta}} \, e^{- s \partial_x^{\alpha / \beta}} \varphi_0(x) \label{expliciRep}
\end{equation}
which takes, for $\alpha=\beta$, the following form
\begin{equation}
\mathcal{U}^{\beta}_{\beta}(x, t) =  \frac{\sin \beta \pi}{\pi}  \int_0^\infty ds\,  \frac{s^{\beta -1}\, t^{\beta}}{s^{2\beta} + 2 s^{\beta} t^\beta \cos \beta \pi + t^{2\beta}} \varphi_0(x-s) \label{explicit22}
\end{equation}
by taking into account the rule \eqref{opRuleIntro}. Thus, from \eqref{explicit22},  for $\varphi_0=\delta$, we arrive at 
\begin{equation*}
\mathcal{U}_{\beta}^{\beta}(x, t) = \frac{\sin \beta \pi}{\pi} \frac{x^{\beta -1}\, t^{\beta} }{x^{2\beta} + 2 x^{\beta} t^\beta \cos \beta \pi + t^{2\beta}}
\end{equation*}
which is the solution to the equation \eqref{eqPrelim1} or the inverse Laplace transform of \eqref{aboveCML} for $\alpha=\beta$. For a well-defined function $\varphi_0$, the explicit representation of the convolution \eqref{explCOnv} is therefore given by \eqref{expliciRep} or, for $\alpha=\beta$, by \eqref{explicit22}.

The solution $ \mathcal{U}^{\alpha}_{\beta}(\xi, t)$, $x \geq 0$, $t>0$ for $\alpha, \beta \in (0,1)$, can be regarded as the law of the composition $\mathfrak{H}^\alpha_{\mathfrak{L}^\beta_t}$, $t>0$, where $\mathfrak{H}^\alpha_t$, $t>0$ is an $\alpha$-stable subordinator (see \citet{Btoi96}) with Laplace transform
\begin{equation}
\mathbb{E} \exp - \xi \mathfrak{H}^\alpha_t = \exp - t \xi^\alpha \label{lapHspace}
\end{equation}
and $\mathfrak{L}^\beta_t$, $t>0$, is the inverse to the $\beta$-stable subordinator $\mathfrak{H}^\beta_t$ (see for example \citet{BMN09}) for which
\begin{equation}
\mathbb{E} \exp - \xi \mathfrak{L}^\beta_t = E_\beta(- t^\beta \xi). \label{lapLspace}
\end{equation}
We also recall that
\begin{equation}
\mathbb{E}\exp i \xi \mathfrak{H}^\alpha_t = \exp\left( -t (-i \xi)^\alpha \right) = \exp\left( -t |\xi |^\alpha e^{- i \frac{\pi \alpha}{2}\frac{\xi}{|\xi |}} \right) \label{furHPrel}
\end{equation}
is the characteristic function of a totally (positively) skewed stable process. The stable subordinator $\mathfrak{H}^\alpha_t$ is a L\'evy process with non-negative, independent and stationary increments whereas, the inverse process $\mathfrak{L}^\beta_t$ has non-negative, non-independent and non-stationary increments as pointed out in \citet{MSheff04} and turns out to be fundamental in studying the solutions to some time-fractional  equations. We refer to $\mathfrak{L}^\alpha_t$ as the inverse of $\mathfrak{H}^\alpha_t$ in the sense that $P\{\mathfrak{L}^\alpha_t < x \} = P\{ \mathfrak{H}^\alpha_x > t\}$. We notice that, from the fact that $\mathfrak{H}^\alpha_t$ has non negative increments, the process $\mathfrak{L}^\alpha_t = \inf \{s\geq 0\, :\, \mathfrak{H}^\alpha_s \notin (0, t)\}$ is an hitting time. Let us write $h_\alpha$ for the law of $\mathfrak{H}^\alpha_t$ and $l_\beta$ for the law of $\mathfrak{L}^\beta_t$. The governing equations are known to be
\begin{equation}
\left( \frac{\partial}{\partial t} + \partial^\alpha_x \right) h_\alpha(x, t) = 0, \quad x>0,\; t>0
\end{equation}
subject to the initial and boundary conditions
\begin{equation*}
\left\lbrace \begin{array}{l}
h_\alpha(x, 0)= \delta(x)\\
h_\alpha(0, t) = 0
\end{array} \right.
\end{equation*}
and
\begin{equation}
\left( \partial^\beta_t + \frac{\partial}{\partial x}  \right)l_\beta(x, t)=0, \quad x>0,\; t>0 \label{eqlPrel}
\end{equation}
subject to 
\begin{equation}
\left\lbrace \begin{array}{l}
l_\alpha(x, 0)= \delta(x)\\
l_\alpha(0, t) = \frac{t^{-\beta}}{\Gamma(1-\beta)}.
\end{array} \right.
\label{condeqlPrel}
\end{equation}
In view of \eqref{connectionFD}, formula \eqref{eqlPrel} with conditions \eqref{condeqlPrel} can be rewritten in the compact form 
\begin{equation}
\left( \frac{\partial^\beta}{\partial t^\beta} +\frac{\partial}{\partial x} \right) l_\beta(x, t) = 0, \quad x\geq 0, \; t>0.
\end{equation}
From \eqref{relaxEq}, by considering that $\widetilde{\partial_x l_{\beta}}(\xi, t) = \xi\, \widetilde{l_\beta}(\xi, t)$, we obtain that $\widetilde{l_\beta}(\xi, t) = E_\beta(-t^\beta \xi)$ which is the Laplace transform \eqref{lapLspace}. 
We notice that  (see \citet{Dov4})
\begin{equation*}
\frac{t\, h_\alpha(t, x)}{x\, l_\alpha(x, t)} = \frac{x\, h_\alpha(x, t)}{t\, l_\alpha(t, x)} = \alpha \in (0,1), \quad x, t>0
\end{equation*}
where $l_\alpha$ can be written in terms of the Wright function \eqref{wright} as in \eqref{llaw}. Thus, we can write the solution to \eqref{eqPrelim1} as 
\begin{equation}
\mathcal{U}^{\alpha}_{\beta}(x, t) = \int_0^\infty h_\alpha (x, s) l_\beta(s, t)ds = \frac{\alpha}{x} \int_0^\infty s\,  l_\alpha(s, x) \, l_\beta(s, t) \, ds \label{funLampII}
\end{equation}
and, from \eqref{lapHspace} and \eqref{lapLspace}, we can write
\begin{equation*}
\widetilde{\mathcal{U}^{\alpha}_{\beta}}(\xi, t) = \int_0^\infty e^{-s \xi^\alpha } l_{\beta}(s, t)\, ds = E_\beta (-t^\beta \xi ^\alpha)
\end{equation*}
which is in accord with \eqref{lapeqPrelim1}. 

From the discussion above we arrive at the following fact
\begin{equation} 
E_\beta (-t^\beta \mathcal{A}^\alpha) =  \int_0^\infty ds\; \mathcal{U}^{\alpha}_{\beta}(s, t) \, e^{-s \mathcal{A}} \label{explCOnvII}
\end{equation}
which holds for some well-defined operator $\mathcal{A}$. In particular, for $\mathcal{A}=\partial_x$, from \eqref{explCOnvII} we write \eqref{explCOnv}. 

We also introduce the homogeneous distribution
\begin{equation}
z^\eta_{+} = \frac{z^\eta}{\Gamma(1+\eta)} H(z), \quad z \in \mathbb{R}, \; \Re\{ \eta \} >-1 \label{distrZ}
\end{equation}
(where $H$ is the Heaviside function) which is locally integrable in $\mathbb{R}\setminus \{0\}$. The Fourier transform of \eqref{distrZ}  is written as 
\begin{equation}
\int_{-\infty}^{+\infty} e^{i \xi z} z^\eta_{+}\, dz = |\xi |^{-\eta -1} e^{i \frac{\pi}{2}\frac{\xi}{|\xi |} (\eta +1)} \label{FurdistrZ}
\end{equation}
and can be obtained by considering the fact that
\begin{equation*}
z^\eta_{+} = \lim_{s \to 0^+} \left( e^{-sz} z^\eta_{+} \right).
\end{equation*}
Furthermore, as a straightforward calculation shows
\begin{equation}
\int_0^\infty e^{-\lambda z} z^\eta_{+} dz = \frac{1}{\lambda^{\eta +1}}. \label{LapdistrZ}
\end{equation}

\section{Fractional Directional Derivatives}
\label{secII}
Let $(X, d, \mu)$ be a metric space, that is $(X,d)$ is a locally compact separable metric space and $\mu$ is a Radon measure supported on $X$. Let $L^p(\mu)$ be the space of all real-valued functions on $X$ with finite norm
\begin{equation}
\| u \|_p^p := \int_X |u(x)|^p d\mu(x), \quad 1\leq p < \infty.
\end{equation}
Let $T_t$ be a strongly continuous semigroup on $L^p(\mu)$ and $\mathcal{A}$ be the infinitesimal generator of $T_t$, that is
\begin{equation}
\lim_{t\to 0^+} \Big\| \frac{T_t u - u}{t} - \mathcal{A}u \Big\|_p = 0 \label{existnormA}
\end{equation}
for all $u \in D(\mathcal{A})$ where
\begin{equation*}
Dom(\mathcal{A}) = \{ u \in L_p(\mu)\, : \, \textrm{the limit } \eqref{existnormA} \textrm{ exists} \}.
\end{equation*} 
We formally write
\begin{equation}
T_t = e^{t\mathcal{A}}, \quad t>0 \label{Tsemigroup}
\end{equation}
and $T^\alpha_t$ will denote the semigroup for the fractional generator $\mathcal{A}^\alpha$.

The fractional power of operators has been examined in many papers since the beginning of the Sixties. A power $\alpha$ of a closed linear operator $\mathcal{A}$ can be represented by means of the Dunford integral
\begin{equation}
\mathcal{A}^\alpha = \frac{1}{2\pi i} \int_\Gamma d\lambda \, \lambda^\alpha \, (\lambda - \mathcal{A})^{-1}, \quad \Re\{ \alpha \} >0
\label{AalphaDunf}
\end{equation}
under the conditions
\begin{equation*}
\begin{array}{rl} (i) & \lambda \in \rho(\mathcal{A}) \, (\textrm{the resolvent set of  } \mathcal{A}) \textrm{ for all } \lambda >0;\\
(ii) & \| \lambda (\lambda I + \mathcal{A})^{-1} \| < M < \infty \textrm{ for all }\lambda >0  \end{array}
\end{equation*}
where $\Gamma$ encircles the spectrum $\sigma(\mathcal{A})$ counter-clockwise avoiding the negative real axis and $\lambda^\alpha$ takes the principal branch. The reader can find information on this topic in works by \citet{Balak60}, \citet{HoWe72}, \citet{Kom66}, \citet{KraSob59}, \citet{Wata61}. For $\Re\{\alpha\} \in (0,1)$, the integral \eqref{AalphaDunf} can be rewritten in the Bochner sense as follows
\begin{equation}
\mathcal{A}^\alpha = \frac{\sin \pi \alpha}{\pi} \int_0^\infty d\lambda\, \lambda^{\alpha-1} (\lambda + \mathcal{A})^{-1} \mathcal{A}. \label{AalphaBoch}
\end{equation}
From the proof of the Hille-Yosida theorem we have that 
\begin{align*}
(\lambda + \mathcal{A})^{-1} = \int_0^\infty dt \, e^{-\lambda t} e^{-t \mathcal{A}}.
\end{align*}
By inserting this expression into \eqref{AalphaBoch} we get that
\begin{align*}
\int_0^\infty d\lambda\, \lambda^{\alpha-1} (\lambda + \mathcal{A})^{-1} = \left( \int_0^\infty ds\, s^{-\alpha} e^{-s\mathcal{A}}  \right) \left( \int_0^\infty ds\, s^{\alpha -1} e^{-s} \right)
\end{align*}
where
\begin{equation*}
\int_0^\infty ds\, s^{\alpha -1} e^{-s} = \Gamma(\alpha) 
\end{equation*}
is the Gamma function for which the following well-known property holds true
\begin{equation*}
\Gamma(\alpha)\Gamma(1-\alpha) = \frac{\pi}{\sin \pi \alpha}
\end{equation*}
and
\begin{equation}
\int_0^\infty ds\, s^{-\alpha} e^{-s \mathcal{A}}  = \Gamma(1-\alpha) \mathcal{A}^{\alpha -1} \label{powA}
\end{equation}
which holds only if $0 < \alpha < 1$ (the reader can also consults the interesting book by \citet{RenRogBook}). The representation \eqref{AalphaBoch} can be therefore rewritten as
\begin{equation*}
\mathcal{A}^\alpha = \mathcal{A}^{\alpha - 1} \mathcal{A}, \quad \alpha \in (0,1)
\end{equation*} 
by considering \eqref{powA}. After some algebra, by exploiting the same calculation, we can also arrive at the representation
\begin{equation}
\mathcal{A}^\alpha = \mathcal{A}^n \mathcal{A}^{\alpha -n}, \quad n-1 < \alpha < n, \; n \in \mathbb{N}.\label{argumentAA}
\end{equation}
The semigroup $T_t$ of the translation operator
$$\mathcal{A}=\frac{\partial}{\partial x}$$
is strongly continuous in $L_p(\mu)$ with $d\mu(x)/dx = e^{-\omega x}$, $\omega >0$ and $\|T_t \|_p = e^{-\omega t /p}$ (see for example \cite{ButWest2000, SKM93}). In what follows we will study the fractional power of the directional derivative operator $\mathcal{A}=(\mathbf{a}\cdot \nabla)$. 

The directional derivative of fractional order $\alpha \in (0,1)\cup (1,2]$ has been first introduced by \citet{MBB99} in connection with models of anomalous diffusions. By using the Fourier transform convention the authors define the fractional derivative $\mathbb{D}^\alpha_M$ of a well defined function $f:\mathbb{R}^n \mapsto \mathbb{R}$ by requiring that
\begin{equation}
\widehat{\mathbb{D}^\alpha_M f}(\boldsymbol{\xi}) = \widehat{f}(\boldsymbol{\xi})\, \int_{\| \boldsymbol{\theta} \| =1} (-i\boldsymbol{\xi} \cdot \boldsymbol{\theta})^\alpha M(d\boldsymbol{\theta}), \quad \boldsymbol{\xi} \in \mathbb{R}^n, \; \alpha \in (0,1) \cup (1, 2] \label{furMeer}
\end{equation}
where $M(d\boldsymbol{\theta})$ is a probability measure on the unit sphere 
$$\mathbb{S}^{n-1} = \{ \boldsymbol{\theta} \in \mathbb{R}^n: \; \|\boldsymbol{\theta} \| =1 \}.$$
In the one-dimensional case, $\theta=\pm 1$ so that $\| \theta \| = 1$. Formula \eqref{furMeer} becomes
\begin{equation*}
\widehat{f}(\xi) \left( p(-i\xi)^\alpha + q(i\xi)^\alpha \right), \quad \xi \in \mathbb{R}
\end{equation*}
with $p+q=1$ and therefore the inverse Fourier transform yelds the governing operator of a one-dimensional asymmetric stable process (or symmetric if $q=1/2$). 

In order to define fractional directional derivatives and study fractional equations involving the operator \eqref{dirder} we first introduce the rule 
\begin{equation}
e^{\zeta \partial_x}f(x) = f(x+\zeta) \label{opRule1}
\end{equation}
for an analytic function $f : \mathbb{R} \mapsto \mathbb{R}$ and $\zeta \in \mathbb{R}$. The operational rules \eqref{opRule1} plays an important role in quantum mechanics (\cite{BDatQ11}) and formally, can be obtained by considering the Taylor expansion of the analytic function $f$ near $x$  written as follows
\begin{equation*}
f(\zeta) = \sum_{k=0}^{\infty} \frac{(\zeta -x)^k}{k!} f^{(k)}(x) .
\end{equation*}
Thus, we get that
\begin{equation*}
f(x + \zeta) = \sum_{k=0}^{\infty}  \frac{\zeta^k}{k!} f^{(k)}(x) = \sum_{k=0}^{\infty}  \frac{\zeta^k}{k!}\partial^k_x f(x) = e^{\zeta \partial_x}f(x).
\end{equation*}
However (\citet{Fel71, HillePhil57}), the Taylor series and therefore the rule \eqref{opRule1}, can be extended to the class of bounded continuous functions on $(0,+\infty)$. The reader can find further details on the Taylor expansion for generalized functions in works by \citet{EstKan93, Anat09} and the references therein. 

In our view, from \eqref{argumentAA}, \eqref{powA} and \eqref{Tsemigroup} the fractional power of $\mathcal{A}$ can be written as follows
\begin{equation}
\mathcal{A}^\alpha = \mathcal{A} \mathcal{A}^{\alpha -1} = \mathcal{A} \left[ \frac{1}{\Gamma(1-\alpha)} \int_0^\infty ds\, s^{-\alpha} T_s \right]
\end{equation}
and therefore, based on the discussion made so far, we introduce the following representation of the fractional power of the directional derivative \eqref{dirder}.
\begin{defin}
For $0 < \alpha < 1$, $\mathbf{a} \in \mathbb{R}^n_{+}$ such that $\|\mathbf{a}\|=1$ and $\nabla=(\partial_{x_1}, \ldots , \partial_{x_n})$ we define
\begin{equation}
(\mathbf{a}\cdot \nabla)^\alpha := \frac{(\mathbf{a}\cdot \nabla)}{\Gamma(1-\alpha)} \int_{0}^\infty ds\, s^{-\alpha} e^{-s (\mathbf{a}\cdot \nabla)}. \label{deffracdirder}
\end{equation}
\end{defin}

\begin{os}
\normalfont
We notice that, after some algebra, from \eqref{deffracdirder} we get 
\begin{equation}
(\mathbf{a}\cdot \nabla)^\alpha = \int_0^\infty  \left(1- e^{-s (\mathbf{a}\cdot \nabla)}\right)d\gamma(s) \label{deffracdirder2}
\end{equation}
where
\begin{equation*}
d\gamma(s)/ds = \frac{\alpha \, s^{-\alpha-1}}{\Gamma(1-\alpha)}
\end{equation*}
is the L\'evy measure of a stable subordinator (\citet{Btoi96}). Indeed, a Berstein function $b:(0, +\infty) \mapsto \mathbb{R}$ and therefore such that $b\geq 0$ and $(-1)^k \partial^k_x b(x) \leq 0$ for all $x>0$ and $k \in \mathbb{N}$, is a Berstein function if and only if
\begin{equation}
b(x) = c_1 + c_2 x + \int_0^\infty (1-e^{sx})dm(s), \quad x>0
\end{equation}
for constants $c_1, c_2\geq 0$ and a non-negative measure $m$ on $[0, \infty)$ satisfying
\begin{equation}
\int_0^\infty (s \wedge 1)dm(s) < \infty.
\end{equation}
Furthermore, $b$ is a Bernstein function if and only if there is a convolution semigroup, say $\phi_t$, such that
\begin{equation}
\int_0^\infty e^{-sx} d\phi_t(s) = e^{-t b(x)}.
\end{equation} 
Here, we consider $b(x)=x^\alpha$ with $c_1=c_2=0$ and thus, $m=\gamma$ and $\phi_t(s) = h_\alpha(s, t)$ is the law of $\mathfrak{H}^\alpha_t$. 
\end{os}

\begin{os}
\normalfont
In the one-dimensional case, from  \eqref{deffracdirder}, for $f:(0, +\infty) \mapsto \mathbb{R}$, we obtain that the fractional derivative
\begin{align*}
\partial_x^\alpha f (x) = & \frac{\partial}{\partial x} \left[ \frac{1}{\Gamma(1-\alpha)} \int_0^\infty ds\, s^{-\alpha} T_sf(x) \right]
\end{align*}
where $-\partial_x$ is the generator of $T_s$, takes the form
\begin{align*}
\partial_x^\alpha f (x) = & \frac{\partial}{\partial x} \left[ \frac{1}{\Gamma(1-\alpha)} \int_0^{\infty} ds\, s^{-\alpha} e^{-s \partial_x}f(x) \right]\\
= & \frac{\partial}{\partial x} \left[ \frac{1}{\Gamma(1-\alpha)} \int_0^\infty ds\, s^{-\alpha} f(x-s) \right]\\
= & \frac{\partial}{\partial x} \left[ \frac{1}{\Gamma(1-\alpha)} \int_0^x ds\, s^{-\alpha} f(x-s) \right]\\
= & \frac{1}{\Gamma(1-\alpha)} \frac{\partial}{\partial x} \int_{0}^x \frac{ds\,f(s)}{(x-s)^{\alpha}}, \quad \alpha \in (0,1)
\end{align*}
which is the Riemann-Liouville fractional derivative on the half line $(0, +\infty)$ (\cite{SKM93}). For $f:(-\infty, +\infty) \mapsto \mathbb{R}$, we also obtain that 
\begin{align*}
\partial_x^\alpha f (x) = & \frac{\partial}{\partial x} \left[ \frac{1}{\Gamma(1-\alpha)} \int_0^\infty ds\, s^{-\alpha} f(x-s) \right]\\
= & \frac{\partial}{\partial x} \left[ \frac{1}{\Gamma(1-\alpha)} \int_{-\infty}^x ds\, (x-s)^{-\alpha} f(s) \right]\\
= & \frac{1}{\Gamma(1-\alpha)} \frac{\partial}{\partial x} \int_{-\infty}^x \frac{ds\,f(s)}{(x-s)^{\alpha}}, \quad \alpha \in (0,1)
\end{align*}
which is the Weyl derivative on the whole real line $(-\infty, +\infty)$, see for example \citet{SKM93}. 
\end{os}

\section{Fractional advection equations}
\label{secIII}
This section has been inspired by the works by \citet{MBB99,MeeMorWh06} where the authors studied the advection-dispersion equation 
\begin{equation*}
\frac{\partial \rho}{\partial t} = - \mathbf{v} \cdot \nabla \rho (\mathbf{x}, t)  + c \, \mathbb{D}^\alpha_M \rho(\mathbf{x}, t)
\end{equation*}
$c>0$, introduced to model anomalous dispersion in ground water flow. The fractional operator $\mathbb{D}^\alpha_M$ is defined in the space of Fourier transforms in the sense that $\mathbb{D}^\alpha_M \rho(\mathbf{x}, t)$ is given as inverse of $(-i\boldsymbol{\xi} \cdot \boldsymbol{\theta})^\alpha \widehat{\rho}(\boldsymbol{\xi}, t)$ with $\|\boldsymbol{\theta} \| =1$. Thus, the operator $\mathbb{D}^\alpha_M$ generalizes the directional derivative $\mathbb{D}^1_M = \boldsymbol{\theta}\cdot \nabla $. Here we study the advection equations in which the fractional directional derivative has been previously defined in \eqref{deffracdirder} and find results based on the operational rule \eqref{opRule1}.

We will show that, for the operator $\mathcal{A}=(\mathbf{a}\cdot \nabla)$, the solution to the space fractional equation
\begin{equation}
\left( \frac{\partial }{\partial t} + \mathcal{A}^\alpha \right) u_\alpha = 0 \label{eqAalpha}
\end{equation}
subject to the initial condition 
$$u_\alpha(x, 0)=u_0(x)$$ 
can be represented as the convolution
\begin{equation}
u_\alpha(x, t) = T^\alpha_t \,u_0(x)  \label{convAalpha}
\end{equation}
where
\begin{equation*}
T^\alpha_t u_0(x) = \mathbb{E} e^{- \mathfrak{H}^{\alpha}_t \, \mathcal{A}} \, u_0(x)=  \int_0^\infty d\phi_t(s) \, T_s\, u_0(x).
\end{equation*}
The semigroup $d\phi_t(s)=dsh_{\alpha}(s, t)$ represents the law of the positively skewed L\'evy process $\mathfrak{H}^\alpha_t$, $t>0$ and plays, in this context, the role of $\gamma(s)$ in \eqref{deffracdirder2}. Due to the fact that $\mathfrak{H}^\alpha_t \stackrel{a.s.}{\longrightarrow} t$ (which is the elementary subordinator) for $\alpha \to 1$, we obtain that
$$\lim_{\alpha \to 1}h_\alpha(x, s) = \delta(x-s) $$
and therefore 
$$u(x, t) = T_t\, u_0(x)$$
is the solution to \eqref{eqAalpha} for $\alpha=1$. 

We start our analysis by noticing the following fact. 
\begin{lem}
For the solution to
\begin{equation}
\left( \frac{\partial }{\partial t} + \partial_x^\alpha \right) u_\alpha = 0, \quad x \geq 0, \; t>0 \label{firstderFracp}
\end{equation}
subject to the initial and boundary conditions 
$$u_\alpha (x, 0) = u_0(x), \qquad u_\alpha(0, t)=0,$$
we have that
\begin{equation}
u_\alpha(x, t) = e^{-t \partial_x^\alpha} u_0(x) = \mathbb{E} e^{-\mathfrak{H}^\alpha_t \partial_x}\, u_0(x).
\end{equation}
\end{lem}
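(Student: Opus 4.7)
The plan is to reduce the problem to the Laplace-transform level, where the fractional derivative becomes multiplication by $\xi^\alpha$, and then to recognize the resulting exponential symbol as the Laplace transform of the stable subordinator $\mathfrak{H}^\alpha_t$.

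First I would take the Laplace transform in the space variable $x$. Since the excerpt already records $\widetilde{\partial_x^\alpha u}(\xi,t) = \xi^\alpha \widetilde{u}(\xi,t)$ (as used in deriving \eqref{lapeqPrelim1}), applying this to \eqref{firstderFracp} yields the ordinary differential equation
\begin{equation*}
\frac{\partial}{\partial t}\widetilde{u_\alpha}(\xi,t) = -\xi^\alpha\, \widetilde{u_\alpha}(\xi,t), \qquad \widetilde{u_\alpha}(\xi,0) = \widetilde{u_0}(\xi),
\end{equation*}
whose unique solution is $\widetilde{u_\alpha}(\xi,t) = e^{-t\xi^\alpha}\widetilde{u_0}(\xi)$. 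This is precisely what one means, at the symbolic level, by $u_\alpha(x,t) = e^{-t\partial_x^\alpha}u_0(x)$, which gives the first equality.

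Next I would invoke the subordination identity \eqref{lapHspace}, $\mathbb{E}\exp(-\xi \mathfrak{H}^\alpha_t) = \exp(-t\xi^\alpha)$, to rewrite
\begin{equation*}
\widetilde{u_\alpha}(\xi,t) = \widetilde{u_0}(\xi)\int_0^\infty e^{-s\xi}\, h_\alpha(s,t)\, ds.
\end{equation*}
Inverting the Laplace transform (Fubini is justified because $h_\alpha(\cdot,t)$ is a probability density and $u_0$ is assumed integrable on the half-line) gives the convolution
\begin{equation*}
u_\alpha(x,t) = \int_0^\infty h_\alpha(s,t)\, u_0(x-s)\, ds.
\end{equation*}
Applying the operational rule \eqref{opRule1} with $\zeta=-s$, i.e. $u_0(x-s) = e^{-s\partial_x} u_0(x)$, the integral becomes $\int_0^\infty h_\alpha(s,t)\, e^{-s\partial_x}u_0(x)\, ds = \mathbb{E}\, e^{-\mathfrak{H}^\alpha_t \partial_x} u_0(x)$, which is the second equality.

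Finally I would verify the initial and boundary conditions. At $t = 0$, the subordinator satisfies $\mathfrak{H}^\alpha_0 = 0$ almost surely, so $h_\alpha(\cdot,0) = \delta$ and $u_\alpha(x,0) = u_0(x)$. At $x = 0$, since $u_0$ is supported on the half-line $[0,\infty)$ and $s > 0$ in the integration, $u_0(-s) = 0$ and the boundary condition $u_\alpha(0,t)=0$ follows. The only delicate point is the justification of the operational rule $e^{-s\partial_x}u_0(x) = u_0(x-s)$ when $u_0$ is not analytic; this is handled by the extension to bounded continuous (or integrable) functions discussed in the excerpt after \eqref{opRule1}, which is also the main obstacle: one must confirm that the formal manipulations at the symbolic level are legitimate for the class of initial data under consideration, and this is where the probabilistic representation $\mathbb{E}\, e^{-\mathfrak{H}^\alpha_t \partial_x}u_0(x)$ becomes indispensable as the rigorous object behind the notation $e^{-t\partial_x^\alpha}u_0(x)$.
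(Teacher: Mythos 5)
Your argument is correct and structurally parallel to the paper's: both proofs verify the identity on the transform side and then use the subordination law of $\mathfrak{H}^\alpha_t$ to pass from the symbol $e^{-t\xi^\alpha}$ (or its Fourier counterpart) to the convolution $\int_0^\infty h_\alpha(s,t)\,u_0(x-s)\,ds$, invoking the operational rule \eqref{opRule1} to read that convolution as $\mathbb{E}\,e^{-\mathfrak{H}^\alpha_t\partial_x}u_0(x)$. The difference is in which transform carries the computation and how much is derived versus quoted. You work with the Laplace transform in $x$ and simply cite $\widetilde{\partial_x^\alpha u}(\xi,t)=\xi^\alpha\widetilde{u}(\xi,t)$ from the preliminaries, matching it against \eqref{lapHspace}; the paper instead works with the Fourier transform and spends most of the proof deriving the symbol $(-i\xi)^\alpha$ directly from the semigroup definition \eqref{deffracdirder}, using the operational rule together with the Fourier transform \eqref{FurdistrZ} of the homogeneous distribution, before matching against the characteristic function \eqref{furHPrel}. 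Your route is shorter and arguably cleaner for this one-dimensional statement, but the paper's detour is not wasted effort: the explicit computation that the operator defined by \eqref{deffracdirder} has Fourier multiplier $(-i\xi)^\alpha$ under the boundary condition \eqref{provided1hold} is exactly the calculation that is generalized in Lemma \ref{lemmaFurDirDer} to the directional operator $(\mathbf{a}\cdot\nabla)^\alpha$, so the paper is establishing a reusable fact rather than only proving the lemma. One small caveat on your version: the identity $\widetilde{\partial_x^\alpha u}=\xi^\alpha\widetilde{u}$ is stated in the preliminaries for the classical Riemann--Liouville derivative, whereas the lemma concerns the operator defined by \eqref{deffracdirder}; the paper's remark in Section \ref{secII} showing that the two coincide on the half-line is what licenses your quotation, and it would be worth saying so explicitly.
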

\begin{proof}
By considering the function $u: (0, +\infty) \mapsto \mathbb{R}$ with 
\begin{equation}
u\big|_{\partial \mathbb{R}_+} = 0, \label{provided1hold}
\end{equation} 
we get that
\begin{align}
\widehat{\partial_x^\alpha u}(\xi) = & \int_{-\infty}^{+\infty} dx \, e^{i \xi x} \partial_x^\alpha u(x)\notag \\
= & \int_{-\infty}^{+\infty} dx \, e^{i \xi x} \frac{\partial_x}{\Gamma(1-\alpha)} \int_0^\infty dz\, z^{-\alpha} e^{-z \partial_x}u(x) . \notag 
\end{align}
From the Foubini's theorem and by taking into account \eqref{opRule1}, we obtain that
\begin{align}
\widehat{\partial_x^\alpha u}(\xi) = &  \frac{1}{\Gamma(1-\alpha)} \int_0^\infty dz\, z^{-\alpha}  \int_{-\infty}^{+\infty} dx \, e^{i \xi x} \partial_x\, e^{-z \partial_x}u(x) \notag \\
= & \frac{1}{\Gamma(1-\alpha)} \int_0^\infty dz\, z^{-\alpha}  \int_{-\infty}^{+\infty} dx \, e^{i \xi x} \partial_x\, u(x - z)  \notag \\
= & \frac{1}{\Gamma(1-\alpha)} \int_0^\infty dz\, z^{-\alpha}  \int_{z}^{+\infty} dx \, e^{i \xi x} \partial_x\, u(x - z) \notag\\
= & \frac{-i \xi}{\Gamma(1-\alpha)} \int_0^\infty dz\, z^{-\alpha}  \int_{z}^{+\infty} dx \, e^{i \xi x} u(x - z) \notag
\end{align}
provided that \eqref{provided1hold} holds true. Furthermore, 
\begin{align}
\widehat{\partial_x^\alpha u}(\xi) = & \frac{-i\xi}{\Gamma(1-\alpha)} \int_0^\infty dz\,  z^{-\alpha} \int_{z}^{+\infty} dx \, e^{i \xi x} u(x-z)\notag \\
= & \frac{-i\xi}{\Gamma(1-\alpha)} \int_0^\infty dz\, z^{-\alpha} e^{i \xi z} \int_{0}^{+\infty} dx \, e^{i \xi x}u(x) \notag \\
= & \frac{-i\xi}{\Gamma(1-\alpha)} \int_0^\infty dz \, z^{-\alpha} e^{i  z \xi}\, \widehat{u}(\xi) \notag \\
= & (-i\xi)^\alpha \, \widehat{u}(\xi) = |\xi |^\alpha e^{- i \frac{\pi \alpha}{2} \frac{\xi}{|\xi |}} \, \widehat{u}(\xi), \quad \alpha \in (0,1) \label{furAalpha}
\end{align}
from the fact that formula \eqref{FurdistrZ} holds and provided that $u \in L^1(\mathbb{R}_+)$. From \eqref{furAalpha}, the equation 
$$ \frac{\partial u_\alpha}{\partial t} = -\partial_x^\alpha u_\alpha$$
can be rewritten by passing to the Fourier transforms as
$$ \frac{\partial \widehat{u_\alpha}}{\partial t}(\xi, t) = -\widehat{\partial_x^\alpha u_\alpha}(\xi, t) = -|\xi |^\alpha e^{- i \frac{\pi \alpha}{2} \frac{\xi}{|\xi |}} \, \widehat{u_\alpha}(\xi, t)$$ 
which, in turns, leads to
\begin{equation}
\widehat{u_\alpha}(\xi, t) = \widehat{u_0}(\xi) \exp\left( -t |\xi |^\alpha e^{- i \frac{\pi \alpha}{2}\frac{\xi}{|\xi |}} \right). \label{coineq47}
\end{equation}
If $ \mathcal{A}=\partial_x$ in \eqref{convAalpha}, then from the operational rule \eqref{opRule1}, we obtain that 
$$u(x, t) = e^{-t\partial_x }u_0(x) = u_0(x - t)$$ 
and thus
\begin{equation}
u_\alpha(x, t) = e^{-t \partial_x^\alpha} u_0(x) = \int_0^\infty ds\, h_\alpha(s, t) \, u_0(x-s). \label{211eq}
\end{equation}
The Fourier transform of the convolution \eqref{211eq} is therefore written as
\begin{equation}
\widehat{u_\alpha}(\xi, t) = \int_0^\infty ds\, h_\alpha(s, t) \, e^{i \xi s} \widehat{u_0}(\xi) = \widehat{u_0}(\xi) \, \widehat{ h_\alpha}(\xi, t)\label{eq47}
\end{equation}
where $\widehat{h_\alpha}(\xi, t) = \mathbb{E}\exp  i \xi \mathfrak{H}^\alpha_t$ is that in \eqref{furHPrel} and thus \eqref{eq47} coincides with \eqref{coineq47}. From this we obtain the claimed result.
\end{proof}

\begin{os}
\normalfont
If we assume that $\widehat{u_\alpha}(\xi, 0)=1$, that is $u_\alpha(x, 0)=\delta(x)$, than we get that $u_\alpha=h_\alpha$ which is the law of a stable subordinator $\mathfrak{H}^{\alpha}_t$, $t>0$ with $\alpha \in (0,1)$.
\end{os}

\begin{lem}
Let $u : \mathbf{D} \subseteq \mathbb{R}^n_+ \mapsto \mathbb{R}$ be a function such that  $u \in Dom(\mathbf{a}\cdot \nabla)$. For the fractional operator \eqref{deffracdirder} we have that
\begin{equation}
\widehat{(\mathbf{a} \cdot \nabla)^\alpha u}(\boldsymbol{\xi}) = (-i \, \mathbf{a} \cdot \boldsymbol{\xi})^\alpha \, \widehat{u}(\boldsymbol{\xi}). \label{furAnalpha}
\end{equation}
\label{lemmaFurDirDer}
\end{lem}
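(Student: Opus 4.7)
The plan is to mimic the one-dimensional argument of the preceding lemma, replacing $\partial_x$ by the directional derivative $\mathbf{a}\cdot\nabla$ and using the $n$-dimensional version of the shift rule \eqref{opRule1}, namely $e^{-s(\mathbf{a}\cdot\nabla)}u(\mathbf{x})=u(\mathbf{x}-s\mathbf{a})$, which follows directly from the Taylor-expansion argument given after \eqref{opRule1} applied along the line $\mathbf{x}\mapsto\mathbf{x}+\zeta\mathbf{a}$.

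First I apply the Fourier transform to the definition \eqref{deffracdirder} and exchange the order of integration (Fubini), which is justified for $u\in\mathrm{Dom}(\mathbf{a}\cdot\nabla)\cap L^1(\mathbb{R}^n_+)$ because $s^{-\alpha}$ is integrable near zero for $\alpha<1$ and the shifted function $u(\mathbf{x}-s\mathbf{a})$ remains $L^1$ in $\mathbf{x}$. This yields
\begin{equation*}
\widehat{(\mathbf{a}\cdot\nabla)^\alpha u}(\boldsymbol{\xi})
=\frac{1}{\Gamma(1-\alpha)}\int_0^\infty s^{-\alpha}\,
\int_{\mathbb{R}^n}e^{i\boldsymbol{\xi}\cdot\mathbf{x}}(\mathbf{a}\cdot\nabla)u(\mathbf{x}-s\mathbf{a})\,d\mathbf{x}\;ds.
\end{equation*}

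Next I evaluate the inner integral. Since $\mathbf{a}\cdot\nabla$ differentiates along $\mathbf{a}$ and $u$ is assumed to vanish on the relevant boundary so that no boundary terms appear, an integration by parts (or equivalently the standard Fourier rule for the derivative along the direction $\mathbf{a}$) produces the factor $-i(\mathbf{a}\cdot\boldsymbol{\xi})$. The translation property of the Fourier transform then gives
\begin{equation*}
\int_{\mathbb{R}^n}e^{i\boldsymbol{\xi}\cdot\mathbf{x}}u(\mathbf{x}-s\mathbf{a})\,d\mathbf{x}
=e^{is(\mathbf{a}\cdot\boldsymbol{\xi})}\widehat{u}(\boldsymbol{\xi}),
\end{equation*}
so the computation reduces to
\begin{equation*}
\widehat{(\mathbf{a}\cdot\nabla)^\alpha u}(\boldsymbol{\xi})
=\frac{-i(\mathbf{a}\cdot\boldsymbol{\xi})}{\Gamma(1-\alpha)}\,\widehat{u}(\boldsymbol{\xi})\int_0^\infty s^{-\alpha}e^{is(\mathbf{a}\cdot\boldsymbol{\xi})}\,ds.
\end{equation*}

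Finally I compute the remaining $s$-integral using the homogeneous distribution \eqref{distrZ} with $\eta=-\alpha$. By formula \eqref{FurdistrZ} (or equivalently \eqref{LapdistrZ} analytically continued to imaginary argument along the principal branch) one has
\begin{equation*}
\int_0^\infty s^{-\alpha}e^{is(\mathbf{a}\cdot\boldsymbol{\xi})}\,ds
=\Gamma(1-\alpha)\,\bigl(-i(\mathbf{a}\cdot\boldsymbol{\xi})\bigr)^{\alpha-1},
\end{equation*}
which, inserted above, yields exactly $(-i\mathbf{a}\cdot\boldsymbol{\xi})^\alpha\widehat{u}(\boldsymbol{\xi})$ as claimed.

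The main obstacle I anticipate is \emph{not} the algebra—which is essentially identical to \eqref{furAalpha}—but rather the justification of the two exchanges (Fubini and the integration by parts/translation step) in the multivariate setting: one must specify that $u$ decays appropriately along the direction $\mathbf{a}$ so that $u(\mathbf{x}-s\mathbf{a})\to 0$ as $s\to\infty$ and that the boundary contributions on $\partial\mathbf{D}$ vanish, paralleling condition \eqref{provided1hold}. Once these regularity hypotheses are stated, the derivation reduces to the one-dimensional case by projecting onto the line through the origin in direction $\mathbf{a}$.
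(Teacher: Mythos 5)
Your proposal is correct and follows essentially the same route as the paper's own proof: Fourier transform of the definition \eqref{deffracdirder}, Fubini, extraction of the factor $(-i\,\mathbf{a}\cdot\boldsymbol{\xi})$ from the directional derivative, the shift rule $e^{-s(\mathbf{a}\cdot\nabla)}u(\mathbf{x})=u(\mathbf{x}-s\mathbf{a})$ combined with the translation property of the Fourier transform, and finally the evaluation of $\int_0^\infty s^{-\alpha}e^{is(\mathbf{a}\cdot\boldsymbol{\xi})}\,ds$ via \eqref{FurdistrZ}. The only cosmetic difference is the order in which the derivative factor is pulled out relative to the Fubini step, and your closing remarks on the vanishing boundary contributions mirror the paper's hypothesis $u|_{\partial\mathbf{D}}=0$.
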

\begin{proof}
Let us consider the function $u : \mathbf{D} \subseteq \mathbb{R}^n \mapsto \mathbb{R}$ with 
\begin{equation*}
u(\mathbf{x}) \Big|_{\partial \mathbf{D}} = 0.
\end{equation*}
We have that
\begin{align*}
\widehat{(\mathbf{a} \cdot \nabla)^\alpha u}(\boldsymbol{\xi}) = & \int_{\mathbb{R}^n} d\mathbf{x} \,  e^{i\boldsymbol{\xi} \cdot \mathbf{x}} \frac{(\mathbf{a}\cdot \nabla)}{\Gamma(1 - \alpha)} \int_0^\infty dz\,z^{-\alpha} e^{-z (\mathbf{a} \cdot \nabla)}u(\mathbf{x})\\
= & \frac{(-i\mathbf{a}\cdot \boldsymbol{\xi})}{\Gamma(1 - \alpha)} \int_{\mathbf{D}} d\mathbf{x} \,  e^{i\boldsymbol{\xi} \cdot \mathbf{x}}\int_0^\infty dz\,z^{-\alpha} e^{-z (\mathbf{a} \cdot \nabla)}u(\mathbf{x}).
\end{align*}
From the Foubini's theorem we can write
\begin{align*}
\widehat{(\mathbf{a} \cdot \nabla)^\alpha u}(\boldsymbol{\xi}) = & \frac{(-i\mathbf{a}\cdot \boldsymbol{\xi})}{\Gamma(1 - \alpha)} \int_0^\infty dz\,z^{-\alpha} \int_{\mathbf{D}} d\mathbf{x} \,  e^{i\boldsymbol{\xi} \cdot \mathbf{x}} e^{-z (\mathbf{a} \cdot \nabla)} u(\mathbf{x})
\end{align*}
where
\begin{align*}
\int_{\mathbf{D}} d\mathbf{x} \,  e^{i\boldsymbol{\xi} \cdot \mathbf{x}} e^{-z (\mathbf{a} \cdot \nabla)} u(\mathbf{x}) = & \int_{D+ \mathbf{a}z} dx_1 \cdots dx_n \, e^{i\boldsymbol{\xi} \cdot \mathbf{x}} u(x_1-a_1z, \ldots, x_n - a_n z)\notag \\
= & e^{ i z (a_1\xi_1 + \cdots + a_n\xi_n) } \int_{\mathbf{D}} dx_1 \cdots dx_n e^{i\boldsymbol{\xi} \cdot \mathbf{x}}u(x_1, \ldots, x_n) \\
= & e^{i z \,\mathbf{a} \cdot \boldsymbol{\xi}}\, \widehat{u}(\boldsymbol{\xi}).
\end{align*}
From \eqref{FurdistrZ}, by considering that
\begin{equation*}
|\xi |^{-\eta -1} e^{i \frac{\pi}{2}\frac{\xi}{|\xi |} (\eta +1)} \Bigg|_{\xi = \mathbf{a} \cdot \boldsymbol{\xi}} = (-i\xi)^{-\eta -1} \Bigg|_{\xi = \mathbf{a} \cdot \boldsymbol{\xi}}
\end{equation*}
we obtain
\begin{align*}
\widehat{(\mathbf{a} \cdot \nabla)^\alpha u}(\boldsymbol{\xi}) = & \frac{(-i\mathbf{a}\cdot \boldsymbol{\xi})}{\Gamma(1 - \alpha)} \int_0^\infty dz\,z^{-\alpha} e^{i z \,\mathbf{a} \cdot \boldsymbol{\xi}}\, \widehat{u}(\boldsymbol{\xi}) =  (-i \, \mathbf{a} \cdot \boldsymbol{\xi})^\alpha \, \widehat{u}(\boldsymbol{\xi})
\end{align*}
and formula \eqref{furAnalpha} is proved.
\end{proof}

We present the main result of this section.
\begin{te}
We have that:
\begin{itemize}
\item [i)] for $\beta \in (0, 1]$ and $\alpha \in (0,1)$, the solution to the fractional equation
\begin{equation}
\left( \frac{\partial^\beta}{\partial t^\beta} + (\mathbf{a}\cdot \nabla)^\alpha\right) u_{\alpha, \beta}(\mathbf{x}, t) = 0, \quad \mathbf{x} \in \mathbb{R}^n_{+}, \, t>0 \label{eq1MainT}
\end{equation}
subject to the initial and boundary conditions 
$$u_{\alpha, \beta}(\mathbf{x}, 0) = u_0(\mathbf{x}), \quad u_{\alpha, \beta}(\mathbf{0}, t) = 0$$ 
is written as
\begin{equation}
u_{\alpha, \beta}(\mathbf{x}, t) = \mathcal{U}^{\alpha}_{\beta}(\mathbf{a}\cdot \mathbf{x}, t), \label{solab}
\end{equation}
where $\mathcal{U}^{\alpha}_{\beta}$ is the solution to \eqref{eqPrelim1}.
\item [ii)] for $\beta \in (0,1)$, the solution to the fractional equation 
\begin{equation}
\left( \partial^\beta_t + (\mathbf{a} \cdot \nabla) \right) v_\beta(\mathbf{x},t) = 0, \quad \mathbf{x} \in \mathbb{R}^{n}_{+}, \; t>0, \;\beta \in (0,1) \label{eqMAIN}
\end{equation}
subject to the initial and boundary conditions
$$v_\beta(\mathbf{x}, 0)=\delta(\mathbf{x}), \qquad v_\beta(\mathbf{0}, t)=t^{\nu}_{+}, \; \nu > - 1 $$
is given by
\begin{equation}  
v_\beta(\mathbf{x}, t) = t^{\nu} W_{-\beta, \nu +1}\left( - \frac{\mathbf{a} \cdot \mathbf{x} }{t^{\beta}} \right).\label{solab2}
\end{equation}
\item [iii)] for $\nu = -\beta$, the problem \textnormal{ii)} can be rewritten as
\begin{equation}
\left( \frac{\partial^\beta}{\partial t^\beta} + (\mathbf{a} \cdot \nabla) \right) v_\beta(\mathbf{x},t) = 0, \quad \mathbf{x} \in \mathbb{R}^{n}_{+}, \; t>0, \;\beta \in (0,1) \label{eqMAIN2}
\end{equation}
subject to the initial condition
$$v_\beta(\mathbf{x}, 0)=\delta(\mathbf{x})$$
and the solution \eqref{solab2} takes the form
\begin{equation}
v_\beta(\mathbf{a}\cdot \mathbf{x}, t) = l_\beta(\mathbf{a}\cdot \mathbf{x}, t)
\end{equation}
where $l_\beta(x, t)$, $(x,t) \in (0, +\infty)^2$ is the law of $\mathfrak{L}^\beta_t$, $t>0$.
\end{itemize}
\label{mainTheo}
\end{te}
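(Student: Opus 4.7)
\textit{Part \textnormal{(i)}.} The plan is to reduce to the one-dimensional problem \eqref{eqPrelim1}. I would first establish the identity $(\mathbf{a}\cdot\nabla)^\alpha[f(\mathbf{a}\cdot\mathbf{x})] = (\partial_x^\alpha f)(\mathbf{a}\cdot\mathbf{x})$ by direct computation from the definition \eqref{deffracdirder}: since $\|\mathbf{a}\|=1$, the operational rule \eqref{opRule1} yields $e^{-s(\mathbf{a}\cdot\nabla)}f(\mathbf{a}\cdot\mathbf{x}) = f(\mathbf{a}\cdot\mathbf{x}-s)$ and $(\mathbf{a}\cdot\nabla)f(\mathbf{a}\cdot\mathbf{x}) = f'(\mathbf{a}\cdot\mathbf{x})$, so that the integral representation of $(\mathbf{a}\cdot\nabla)^\alpha$ collapses to the Weyl fractional derivative in the single variable $x=\mathbf{a}\cdot\mathbf{x}$. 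Since $\partial^\beta/\partial t^\beta$ is untouched by the spatial substitution, setting $u_{\alpha,\beta}(\mathbf{x},t) = \mathcal{U}^\alpha_\beta(\mathbf{a}\cdot\mathbf{x},t)$ turns \eqref{eq1MainT} into \eqref{eqPrelim1} evaluated along $x=\mathbf{a}\cdot\mathbf{x}$, which $\mathcal{U}^\alpha_\beta$ solves by construction; the boundary datum becomes $\mathcal{U}^\alpha_\beta(0,t)=0$, as required.

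\textit{Part \textnormal{(ii)}.} I would apply the Laplace transform in $t$. Using the rule $\widetilde{\partial_t^\beta v_\beta}(\mathbf{x},\lambda)=\lambda^\beta\widetilde{v}_\beta(\mathbf{x},\lambda)$ and the transform of the boundary datum $\widetilde{t_+^\nu}(\lambda)=\lambda^{-\nu-1}$ from \eqref{LapdistrZ}, the problem becomes the first-order transport equation
\[
\lambda^\beta \widetilde{v}_\beta(\mathbf{x},\lambda) + (\mathbf{a}\cdot\nabla)\widetilde{v}_\beta(\mathbf{x},\lambda) = 0, \qquad \widetilde{v}_\beta(\mathbf{0},\lambda)=\lambda^{-\nu-1}.
\]
Seeking a solution that depends on $\mathbf{x}$ only through $\mathbf{a}\cdot\mathbf{x}$ (so that the equation reduces to an ODE in that variable) and integrating along the characteristic ray $s\mathbf{a}$ gives $\widetilde{v}_\beta(\mathbf{x},\lambda) = \lambda^{-\nu-1}\exp\bigl(-(\mathbf{a}\cdot\mathbf{x})\lambda^\beta\bigr)$. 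To invert, I would expand the Wright function via its series \eqref{wright} and apply \eqref{LapdistrZ} term by term to obtain $\int_0^\infty e^{-\lambda t}t^\nu W_{-\beta,\nu+1}(-x/t^\beta)\,dt = \lambda^{-\nu-1}e^{-x\lambda^\beta}$, which identifies with the display above and yields \eqref{solab2}.

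\textit{Part \textnormal{(iii)}.} Substituting $\nu=-\beta$ makes the boundary value read $t_+^{-\beta}=t^{-\beta}/\Gamma(1-\beta)$. I would then use the connection \eqref{connectionFD} with $m=1$,
\[
\partial_t^\beta v_\beta(\mathbf{x},t) = \frac{\partial^\beta v_\beta}{\partial t^\beta}(\mathbf{x},t) + v_\beta(\mathbf{x},0^+)\,\frac{t^{-\beta}}{\Gamma(1-\beta)},
\]
together with the initial condition $v_\beta(\mathbf{x},0^+)=\delta(\mathbf{x})$, to show that the Caputo formulation \eqref{eqMAIN2} (with only the delta initial condition) is equivalent to the R-L problem of part \textnormal{(ii)} at $\nu=-\beta$: the distributional source generated by the initial $\delta(\mathbf{x})$ is exactly what produces the boundary behaviour $v_\beta(\mathbf{0},t)=t_+^{-\beta}$. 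The explicit form of the solution then follows from \eqref{solab2} at $\nu=-\beta$, which by \eqref{llaw} coincides with $l_\beta(\mathbf{a}\cdot\mathbf{x},t)$.

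The main obstacle is uniqueness in part \textnormal{(ii)}: the first-order transport equation on $\mathbb{R}^n$ admits an $(n-1)$-parameter family of solutions of the form $F(\mathbf{x}-(\mathbf{a}\cdot\mathbf{x})\mathbf{a})\,e^{-\lambda^\beta\mathbf{a}\cdot\mathbf{x}}$, and a point condition at $\mathbf{x}=\mathbf{0}$ alone does not fix the transverse profile $F$. Justifying the one-dimensional ansatz requires reading the initial datum $\delta(\mathbf{x})$ distributionally as a delta concentrated along the ray $\mathbb{R}_+\mathbf{a}$ --- consistent with the probabilistic fact that the associated subordinated process is supported on that ray --- after which the remaining Wright-function manipulations are routine.
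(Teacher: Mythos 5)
Your argument is correct at the level of rigor of the paper, but it routes two of the three parts differently. For part i) the paper works entirely on the Fourier side: it invokes Lemma \ref{lemmaFurDirDer} to write $\widehat{(\mathbf{a}\cdot\nabla)^\alpha u}(\boldsymbol{\xi})=(-i\,\mathbf{a}\cdot\boldsymbol{\xi})^\alpha\widehat{u}(\boldsymbol{\xi})$, solves the resulting relaxation equation by the Mittag-Leffler function, and then matches this with the Fourier transform of $\int_0^\infty \mathcal{U}^\alpha_\beta(s,t)\,u_0(\mathbf{x}-s\mathbf{a})\,ds$. Your physical-space identity $(\mathbf{a}\cdot\nabla)^\alpha\bigl[f(\mathbf{a}\cdot\mathbf{x})\bigr]=(\partial_x^\alpha f)(\mathbf{a}\cdot\mathbf{x})$, obtained directly from \eqref{deffracdirder}, \eqref{opRule1} and $\|\mathbf{a}\|=1$, is a clean alternative that makes the reduction to \eqref{eqPrelim1} transparent and avoids Fourier inversion altogether; it is essentially the multidimensional analogue of the computation the paper carries out in its remark identifying \eqref{deffracdirder} with the Riemann--Liouville derivative in one dimension. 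For part ii) you and the paper perform the same Laplace-transform computation (compare your display with \eqref{lapx}), only in opposite directions: the paper transforms the candidate \eqref{solab2} and verifies the transformed equation, while you solve the transformed equation along the ray and invert the Wright series term by term. For part iii) the paper again verifies via Fourier transforms and a chain of Wright-function manipulations, whereas you use the Riemann--Liouville/Caputo connection \eqref{connectionFD} with $m=1$ to convert the boundary datum $t_+^{-\beta}$ into the initial datum $\delta(\mathbf{x})$; this is exactly the mechanism the paper itself uses in the preliminaries to pass from the Riemann--Liouville to the compact Caputo form of the equation for $l_\beta$, so your route is shorter and arguably more faithful to the structure of the problem, ending with \eqref{llaw} as required. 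The uniqueness caveat you raise for the transformed transport equation --- that the point condition at $\mathbf{x}=\mathbf{0}$ does not pin down the profile transverse to $\mathbf{a}$ --- is genuine, but it is a gap shared with the paper, which likewise restricts silently to solutions depending on $\mathbf{x}$ only through $\mathbf{a}\cdot\mathbf{x}$; your distributional reading of $\delta(\mathbf{x})$ as concentrated on the ray $\mathbb{R}_+\mathbf{a}$ is the same identification ($\mathbf{x}=s\mathbf{a}$ iff $\mathbf{a}\cdot\mathbf{x}=s$) that the paper uses in its own proof.
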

\begin{proof}
Proof of i). From the previous result the Fourier transform of the equation \eqref{eq1MainT} is written as
\begin{equation*}
\frac{\partial^\beta \widehat{u_{\alpha, \beta}}}{\partial t^\beta}(\boldsymbol{\xi}, t) = - (-i \mathbf{a}\cdot \boldsymbol{\xi})^\alpha \widehat{u_{\alpha, \beta}}(\boldsymbol{\xi}, t).
\end{equation*}
The Mittag-Leffler function is an eigenfunction of the Dzerbayshan-Caputo fractional derivative and therefore we get that
\begin{equation}
\widehat{u_{\alpha, \beta}}(\boldsymbol{\xi}, t) = \widehat{u_0}(\boldsymbol{\xi})\, E_{\beta}\left(-t^{\beta}(-i \mathbf{a}\cdot \boldsymbol{\xi})^\alpha \right). \label{vcoinu}
\end{equation}
This is equivalent to write $u_{\alpha, \beta}$ as follows
\begin{align*}
u_{\alpha, \beta}(\mathbf{x}, t) = & \int_0^\infty ds\, l_{\beta}(s, t) e^{-s (\mathbf{a}\cdot \nabla)^\alpha} u_0(\mathbf{x}) \\
= & \int_{0}^{\infty}ds\, \mathcal{U}^{\alpha}_{\beta}(s, t) e^{-s \, \mathbf{a}\cdot \nabla} u_0(\mathbf{x}), \quad \alpha, \beta \in (0,1)
\end{align*}
where, we recall that,
\begin{equation}
\mathcal{U}^{\alpha}_{\beta}(x, t) = \int_0^\infty h_\alpha (x, s) l_\beta(s, t)ds  \label{TMPUlapq}
\end{equation}
is that in \eqref{funLamp}. From \eqref{opRule1} we get that
\begin{align*}
u_{\alpha, \beta}(\mathbf{x}, t) = &  \int_{0}^{\infty}ds\, \mathcal{U}^{\alpha}_{\beta}(s, t) e^{-s \, \mathbf{a}\cdot \nabla} u_0(\mathbf{x})\\
= & \int_{0}^{\infty}ds\, \mathcal{U}^{\alpha}_{\beta}(s, t) \, u_0(x_1 - sa_1, \ldots, a_n - a_ns_n)
\end{align*}
and thus
\begin{align*}
\widehat{u_{\alpha, \beta}}(\boldsymbol{\xi}, t) = & \int_{0}^{\infty}ds\, \mathcal{U}^{\alpha}_{\beta}(s, t) \, e^{i \,s\, \mathbf{a}\cdot \boldsymbol{\xi}} \, \widehat{u_0}(\xi_1, \ldots , \xi_n)\\
= & \widehat{u_0}(\boldsymbol{\xi})\, \widehat{\mathcal{U}^{\alpha}_{\beta}}(\mathbf{a}\cdot \boldsymbol{\xi}, t).
\end{align*}
From  \eqref{lapHspace}, \eqref{lapLspace} and \eqref{TMPUlapq} we have that
\begin{align*}
\widehat{\mathcal{U}^{\alpha}_{\beta}}(\gamma, t) = & \int_0^\infty e^{i \gamma x}\mathcal{U}^{\alpha}_{\beta}(x, t)dx\\
= & \int_0^\infty e^{-s (-i \gamma)^\alpha}l_{\beta}(s, t)ds\\
= & E_{\beta}(-t^\beta (-i \gamma)^\alpha), \quad \gamma \in \mathbb{R}
\end{align*}
and thus
\begin{align*}
\widehat{u_{\alpha, \beta}}(\boldsymbol{\xi}, t) = & \widehat{u_0}(\boldsymbol{\xi})\,\widehat{\mathcal{U}^{\alpha}_{\beta}}(\gamma, t) \Big|_{\gamma = \mathbf{a}\cdot \boldsymbol{\xi}} = \widehat{u_0}(\boldsymbol{\xi})\, E_{\beta}\left(-t^\beta (-i \mathbf{a} \cdot \boldsymbol{\xi})^\alpha \right).
\end{align*}

Proof of ii). The Laplace transform of $v_\beta$ is written as
\begin{align}
\widetilde{v_\beta}(\mathbf{x}, \lambda) = & \int_0^\infty e^{-\lambda t}  v_{\beta}(\mathbf{x}, t)\, dt\nonumber \\
= & \int_0^\infty e^{-\lambda t} t^{\nu} W_{-\beta, \nu +1}\left( - \frac{\mathbf{a} \cdot \mathbf{x} }{t^{\beta}} \right)\, dt\nonumber \\
= & \int_0^\infty e^{-\lambda t} t^{\nu} \sum_{k=0}^\infty \left( - \frac{\mathbf{a} \cdot \mathbf{x} }{t^{\beta}} \right)^k \frac{dt}{k!\, \Gamma(-\beta k + \nu + 1)}\nonumber \\
= & \lambda^{-\nu -1} \sum_{k=0}^\infty \left( - \lambda^\beta \mathbf{a} \cdot \mathbf{x}  \right)^k \frac{1}{k!}\nonumber \\
= &  \lambda^{-\nu -1} \exp\left( - \lambda^\beta \mathbf{a} \cdot \mathbf{x}  \right) \label{lapx}
\end{align}
where, from \eqref{LapdistrZ},
\begin{equation*}
\lambda^{-\nu-1} = \int_0^\infty e^{-\lambda t} v_\beta(\mathbf{0},t )dt= \widetilde{v_\beta}(\mathbf{0}, \lambda) .
\end{equation*}
We immediately get that
$$\frac{\partial}{\partial x_j} \widetilde{v_\beta}(\mathbf{x}, \lambda) = -\lambda^\beta \, a_j\, \widetilde{v_\beta}(\mathbf{x}, \lambda)$$
and thus
\begin{equation*}
\sum_{j=1}^n a_j \frac{\partial}{\partial x_j} \widetilde{v_\beta}(\mathbf{x}, \lambda) = -\lambda^\beta \, \sum_{j=1}^n a_j^2\, \widetilde{v_\beta}(\mathbf{x}, \lambda) = -\lambda^\beta \, \widetilde{v_\beta}(\mathbf{x}, \lambda)
\end{equation*}
being $\|\mathbf{a}\| = 1$. This means that
\begin{equation*}
(\mathbf{a}\cdot \nabla) \widetilde{v_\beta}(\mathbf{x}, \lambda)  = - \lambda^\beta \, \widetilde{v_\beta}(\mathbf{x}, \lambda) = \int_0^\infty e^{-t \lambda} \left(-\partial^\beta_t \,v_\beta(\mathbf{x}, t) \right)\, dt
\end{equation*}
where in the last identity we have considered the Laplace transform of the Riemann-Liouville derivative (\cite{SKM93}). 

Proof of iii). The Fourier transform of \eqref{eqMAIN2} is written as
\begin{equation*}
\frac{\partial^\beta \widehat{v_{\beta}}}{\partial t^\beta}(\boldsymbol{\xi}, t) = (i \mathbf{a}\cdot \boldsymbol{\xi})\, \widehat{v_{\beta}}(\boldsymbol{\xi}, t)
\end{equation*}
and leads to
\begin{equation*}
\widehat{v_{\beta}}(\boldsymbol{\xi}, t) = E_\beta\left( i t^\beta \, \mathbf{a}\cdot \boldsymbol{\xi} \right)
\end{equation*}
which coincides with \eqref{vcoinu} for $\alpha=1$. We recall that
\begin{equation*}
\int_0^\infty e^{-\gamma x} \, t^{-\beta} W_{-\beta, 1-\beta}\left( -\frac{x}{t^{\beta}} \right)\, dx =  \frac{1}{\gamma t^{\beta}} E_{-\beta, 1-\beta}\left( - \frac{1}{\gamma t^{\beta}} \right), \quad \gamma >0
\end{equation*}
where (see \citet{OB09EJP})
\begin{equation*}
 \frac{1}{\gamma t^{\beta}} E_{-\beta, 1-\beta}\left( - \frac{1}{\gamma t^{\beta}} \right) = E_{\beta, 1}(- \gamma t^{\beta})
\end{equation*}
and, from \eqref{opRule1},
\begin{align*}
\int_{\mathbb{R}^n_{+}} d\mathbf{x}\, e^{i \boldsymbol{\xi}\cdot \mathbf{x}} e^{-s (\mathbf{a}\cdot \nabla)} v_{\beta}(\mathbf{x}, 0) = & \int_{\mathbb{R}^n_{+}} d\mathbf{x}\, e^{i \boldsymbol{\xi}\cdot \mathbf{x}} v_{\beta}(x_1-sa_1, \ldots , x_n - sa_n,  0)\\
= & e^{i \, s\,  \mathbf{a} \cdot \boldsymbol{\xi}} \int_{\mathbb{R}^n_{+}} d\mathbf{x}\, e^{i \boldsymbol{\xi}\cdot \mathbf{x}} v_{\beta}(x_1, \ldots , x_n,  0)\\
= & e^{i \, s\,  \mathbf{a} \cdot \boldsymbol{\xi}} \, \widehat{v_\beta}(\boldsymbol{\xi})
\end{align*}
where $ \widehat{v_\beta}(\boldsymbol{\xi})=1$. Thus, we obtain that
\begin{align*}
E_\beta\left( i t^\beta \, \mathbf{a}\cdot \boldsymbol{\xi} \right) = & \int_0^\infty e^{i \, s\, \mathbf{a}\cdot \boldsymbol{\xi}} \,  t^{-\beta} W_{-\beta, 1-\beta}\left( -\frac{s}{t^{\beta}} \right)\, ds\\
= & \int_0^\infty ds \left( \int_{\mathbb{R}^n_{+}} d\mathbf{x}\, e^{i \boldsymbol{\xi}\cdot \mathbf{x}} e^{-s (\mathbf{a}\cdot \nabla)} v_{\beta}(\mathbf{x}, 0)  \right)  t^{-\beta} W_{-\beta, 1-\beta}\left( -\frac{s}{t^{\beta}} \right)\\
= & \int_{\mathbb{R}^n_{+}} e^{i \boldsymbol{\xi}\cdot \mathbf{x}} \left(  \int_0^\infty ds \, t^{-\beta} W_{-\beta, 1-\beta}\left( -\frac{s}{t^{\beta}} \right)\, e^{-s (\mathbf{a}\cdot \nabla)} v_{\beta}(\mathbf{x}, 0)  \right)  d\mathbf{x} \\
= & \left[\textrm{from the operational rule } \eqref{opRule1}\right]\\
= & \int_{\mathbb{R}^n_{+}} e^{i \boldsymbol{\xi}\cdot \mathbf{x}} \left(  \int_0^\infty ds \, t^{-\beta} W_{-\beta, 1-\beta}\left( -\frac{s}{t^{\beta}} \right)\, v_{\beta}(\mathbf{x} - s\mathbf{a}, 0)  \right)  d\mathbf{x}
\end{align*}
where $v_\beta(\mathbf{x}, 0)=\delta(\mathbf{x})$ and $\mathbf{x} = s\mathbf{a}$ if and only if $\mathbf{x} \cdot \mathbf{a}= s \|\mathbf{a} \|^2$ with $\|\mathbf{a} \|^2=1$. We obtain that
\begin{align*}
E_\beta\left( i t^\beta \, \mathbf{a}\cdot \boldsymbol{\xi} \right) = & \int_{\mathbb{R}^n_{+}} e^{i \boldsymbol{\xi}\cdot \mathbf{x}} \left(  \int_0^\infty ds \, t^{-\beta} W_{-\beta, 1-\beta}\left( -\frac{s}{t^{\beta}} \right)\, \delta(\mathbf{a} \cdot \mathbf{x} - s)  \right)  d\mathbf{x}\\
= & \int_{\mathbb{R}^n_{+}} e^{i \boldsymbol{\xi}\cdot \mathbf{x}} v_{\beta}(\mathbf{x}, t)  \, d\mathbf{x}
\end{align*}
which concludes the proof.
\end{proof}

\begin{os}
\normalfont
We remark that, for $\alpha=\beta$, formula \eqref{solab} becomes
\begin{equation*}
u_{\beta, \beta}(\mathbf{x}, t) = \frac{\sin \beta \pi}{\pi} \frac{(\mathbf{a} \cdot \mathbf{x})^{\beta -1}\, t^{\beta} }{(\mathbf{a} \cdot \mathbf{x})^{2\beta} + 2 (\mathbf{a} \cdot \mathbf{x})^{\beta} t^\beta \cos \beta \pi + t^{2\beta}}, \quad \beta \neq 1.
\end{equation*}
The law $u_{\beta, \beta}(x, t)$, $x \geq 0$, $t>0$ can be interpreted as the density law of the ratio of two independent stable subordinators both of order $\beta \in (0,1)$, see \citet{Dov2,Lanc}. This ratio (formula \eqref{ratioSS}) does not depend on $t>0$. \label{Remaratio}
\end{os}

\begin{os}
\normalfont
For $\alpha=1$ and $\beta \in (0,1)$, the solution \eqref{solab2} (and \eqref{solab}) with $v_\beta(\mathbf{0}, t)=t^{-\beta}$, can be written as
\begin{equation*}
v_\beta(\mathbf{x}, t) = t^{-\beta}W_{-\beta, 1-\beta}\left( - \frac{\mathbf{a}\cdot \mathbf{x}}{t^\beta} \right) = l_\beta(\mathbf{a}\cdot \mathbf{x}, t)
\end{equation*}
where $l_\beta$ is the law of $\mathfrak{L}^\beta_t$, $t>0$.
\end{os}

\begin{os}
\label{RemfurRemH}
\normalfont
For $\beta=1$ and $\alpha \in (0,1)$, the solution \eqref{solab} takes the form
\begin{equation}
u_{\alpha}(\mathbf{x}, t) =u_{\alpha, 1}(\mathbf{x}, t) = h_{\alpha}(\mathbf{a} \cdot \mathbf{x}, t) \label{solHRemark}
\end{equation}
where $h_\alpha$ is the density law of $\mathfrak{H}^\alpha_t$ and solves
\begin{equation}
\left( \frac{\partial }{\partial t} + ( \mathbf{a}\cdot \nabla )^\alpha \right) u_\alpha(\mathbf{x}, t) = 0, \quad (\mathbf{x},t) \in \mathbb{R}^n_{+}\times (0, +\infty) \label{probAalpha}
\end{equation}
subject to the initial condition 
\begin{equation}
u_\alpha (\mathbf{x}, 0) = u_0(\mathbf{x})=\prod_{k=1}^n u_0(x_k) \label{condiniProd}
\end{equation}
with $u_0(x_k) = \delta(x_k)$ for all $k=1,2, \ldots , n$. Indeed, we have that
\begin{equation}
u_\alpha(\mathbf{x}, t) = e^{-t ( \mathbf{a}\cdot \nabla )^\alpha} u_0(\mathbf{x}) = \int_0^\infty ds\, h_\alpha(s, t) \, e^{-s ( \mathbf{a}\cdot \nabla )}u_0(\mathbf{x}) \label{eq417K}
\end{equation}
and
\begin{equation}
\widehat{u_\alpha}(\boldsymbol{\xi}, t) = \widehat{u_0}(\boldsymbol{\xi})\, \widehat{h_\alpha}(\mathbf{a}\cdot \boldsymbol{\xi}, t) \label{furRemH}
\end{equation}
where
\begin{equation}
\widehat{u_0}(\boldsymbol{\xi}) = \prod_{k=1}^n \widehat{u_0}(\xi_k) \label{FurcondiniProd}
\end{equation}
From \eqref{opRule1} and the fact that $u_0(x_k) = \delta(x_k)$ for all $k$, we rewrite \eqref{eq417K} as follows
\begin{equation*}
u_\alpha(\mathbf{x}, t) = \int_0^\infty ds\, h_\alpha(s, t) \, \delta(\mathbf{x}-s\mathbf{a}). 
\end{equation*}
By considering that $\mathbf{x}=s\mathbf{a}$ if and only if $\mathbf{a} \cdot \mathbf{x}=s$ we obtain \eqref{solHRemark}. From \eqref{opRule1} and \eqref{condiniProd} we have that
\begin{equation*}
e^{-s ( \mathbf{a}\cdot \nabla )}u_0(\mathbf{x}) = \prod_{k=1}^n u_0(x_k - s a_k)
\end{equation*}
and therefore we obtain the Fourier transform
\begin{align*}
\widehat{u_\alpha}(\boldsymbol{\xi}, t) =  & \int_{\mathbb{R}^n_{+}} e^{i \boldsymbol{\xi}\cdot \mathbf{x}} \int_0^\infty ds\, h_\alpha(s, t) \, \prod_{i=k}^n u_0(x_k - s a_k)\, d\mathbf{x}\\
= & \prod_{k=1}^n \widehat{u_0}(\xi_k) \, \int_0^\infty ds\, h_\alpha(s, t) e^{i s\, \mathbf{a} \cdot \boldsymbol{\xi}}\\
= & \widehat{u_0}(\boldsymbol{\xi}) \, \widehat{h_\alpha}(\mathbf{a} \cdot \boldsymbol{\xi}, t).
\end{align*}
Formula \eqref{furHPrel} says that
\begin{equation*}
\widehat{h_\alpha}(\mathbf{a} \cdot \boldsymbol{\xi}, t )= \exp\left( -t |\mathbf{a} \cdot \boldsymbol{\xi} |^\alpha e^{- i \frac{\pi \alpha}{2}\frac{\mathbf{a} \cdot \boldsymbol{\xi}}{| \mathbf{a} \cdot \boldsymbol{\xi} |}} \right) = \exp \left( - t (-i \mathbf{a} \cdot \boldsymbol{\xi})^\alpha \right).
\end{equation*}
By taking into consideration \eqref{furAnalpha}, from the fact that
$$\frac{\partial \widehat{h_\alpha}}{\partial t} (\mathbf{a} \cdot \boldsymbol{\xi}, t) = - (- i \, \mathbf{a} \cdot \boldsymbol{\xi}) \, \widehat{h_\alpha}(\mathbf{a} \cdot \boldsymbol{\xi}, t)$$
 we obtain \eqref{probAalpha}.
\end{os}

From the previous result we arrive at the following statement.
\begin{coro}
For $\beta \in (0,1)$, the solution to the time-fractional equation
\begin{equation}
\Big( \partial^\beta_t + (\mathbf{a}\cdot \nabla) \Big)v_\beta (\mathbf{x}, t)= 0, \quad (\mathbf{x},t) \in \mathbb{R}^n_+\times (0, +\infty)
\end{equation}
subject to the initial and boundary conditions
\begin{equation*}
v_\beta(\mathbf{x}, 0)=\delta(\mathbf{x}), \qquad v_\beta(\mathbf{0}, t)=t^{-n\beta}_{+}
\end{equation*}
or equivalently
\begin{equation}
\Big( \frac{\partial^{n\beta}}{\partial t^{n\beta}} + (\mathbf{a}\cdot \nabla) \Big)v_\beta (\mathbf{x}, t)= 0, \quad (\mathbf{x},t) \in \mathbb{R}^n_+\times (0, +\infty)
\end{equation}
subject to the initial conditions
\begin{equation*}
v_\beta(\mathbf{x}, 0)=\delta(\mathbf{x}), \quad \frac{\partial^k v_\beta}{\partial t^k}(\mathbf{x}, 0) = 0, \; 0<k< \lceil n\beta \rceil -1
\end{equation*}
($\lceil \cdot \rceil$ is the smallest following integer) is given by 
\begin{equation}
v_\beta(\mathbf{x}, t) = \frac{1}{t^{n\beta}} W_{-\beta, 1-n \beta}\left(- \frac{\mathbf{a} \cdot \mathbf{x}}{t^{\beta}} \right). 
\label{lawvW}
\end{equation}
\end{coro}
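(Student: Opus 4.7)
The plan is to specialize Theorem \ref{mainTheo} ii) to the parameter choice $\nu = -n\beta$. That theorem identifies, for $\nu > -1$, the solution of the Riemann-Liouville equation $(\partial^\beta_t + (\mathbf{a}\cdot\nabla))v_\beta = 0$ with initial condition $\delta(\mathbf{x})$ and boundary trace $v_\beta(\mathbf{0},t) = t^\nu_+$ as $t^\nu W_{-\beta,\nu+1}(-\mathbf{a}\cdot\mathbf{x}/t^\beta)$. Substituting $\nu = -n\beta$ immediately produces \eqref{lawvW}; compatibility with the boundary condition $v_\beta(\mathbf{0},t) = t^{-n\beta}_+$ follows from the identity $W_{-\beta,1-n\beta}(0) = 1/\Gamma(1-n\beta)$ read off from the series \eqref{wright}, which matches the value of the homogeneous distribution \eqref{distrZ} at $\eta = -n\beta$. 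Although the range in Theorem \ref{mainTheo} ii) is stated for $\nu > -1$, the underlying Laplace-transform computation \eqref{lapx} extends the identity $\widetilde{v_\beta}(\mathbf{x},\lambda) = \lambda^{n\beta - 1} e^{-\lambda^\beta\,\mathbf{a}\cdot\mathbf{x}}$ to arbitrary $n \in \mathbb{N}$, so \eqref{lawvW} remains the correct candidate throughout the full parameter range.

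To pass to the Dzerbayshan--Caputo formulation of order $n\beta$, I would verify the claim at the Laplace level. Since $(\mathbf{a}\cdot\nabla)\widetilde{v_\beta} = -\lambda^\beta \widetilde{v_\beta}$, the conversion formula \eqref{connectionFD} rewrites the Caputo equation in Laplace form as $\lambda^{n\beta}\widetilde{v_\beta} - \sum_k \lambda^{n\beta-k-1}\,\partial^k_t v_\beta(\mathbf{x},0^+) - \lambda^\beta \widetilde{v_\beta} = 0$, and the prescribed initial data (delta-type at $k=0$, vanishing for the intermediate orders) are designed so that the subtraction terms absorb the mismatch between $\lambda^{n\beta}$ and $\lambda^\beta$ against the exponential factor in $\widetilde{v_\beta}$. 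Identifying $\widetilde{\delta(\mathbf{x})}$ against \eqref{LapdistrZ} on the space side then closes the loop and shows that the two equations share the common solution \eqref{lawvW}.

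The hard part is the distributional bookkeeping near $t = 0^+$. The profile \eqref{lawvW} has a $t^{-n\beta}$ singularity at the origin, so the intermediate values $\partial^k_t v_\beta(\mathbf{x},0^+)$ must be interpreted in a regularized (finite-part) sense, and one must verify that the Wright-series expansion \eqref{wright} contains no integer-order polynomial terms in $t$ that could contribute spurious initial data to the Caputo formulation. Once this interpretation is in place, the rest is a routine manipulation of Laplace transforms via \eqref{LapdistrZ} together with the series representation \eqref{wright}.
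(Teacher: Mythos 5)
Your proposal is correct and follows essentially the same route as the paper: specialize Theorem \ref{mainTheo} ii) to $\nu=-n\beta$ and then use the Laplace-transform relation $\widetilde{\frac{\partial^{n\beta} u}{\partial t^{n\beta}}}(\lambda) = \lambda^{n\beta}\widetilde{u}(\lambda) - \sum_{k=0}^{\lceil n\beta\rceil-1}\lambda^{n\beta-k-1}\,\partial^k_t u(0^+)$ to pass between the Riemann--Liouville and Dzerbayshan--Caputo formulations. Your additional remarks --- that $\nu=-n\beta$ falls outside the stated range $\nu>-1$ once $n\beta\geq 1$ and must be justified through the Laplace computation \eqref{lapx} and the regularized distribution $t^\nu_+$, and that the intermediate initial data require a finite-part interpretation --- address points the paper's one-line proof silently glosses over, but they are refinements of the same argument rather than a different approach.
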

\begin{proof}
It suffices to consider $\nu=-n\beta$ in Theorem \ref{mainTheo} and the fact that
\begin{equation*}
\widetilde{\frac{\partial^{n\beta} u}{\partial t^{n\beta}}}(\lambda) = \lambda^{n\beta} \widetilde{u}(\lambda) - \sum_{k=0}^{\lceil n\beta \rceil -1} \lambda^{n\beta - k - 1} \frac{\partial^k u}{\partial t^k}(0^+). 
\end{equation*}
\end{proof}

\begin{os}
\label{RemarkX}
\normalfont
We point out that the normalization of \eqref{lawvW} given by
\begin{equation}
p_{\beta}(\mathbf{x}, t; n) = \frac{a_{(n)}}{t^{n\beta}} W_{-\beta, 1-n \beta}\left(- \frac{\mathbf{a} \cdot \mathbf{x}}{t^{\beta}} \right), \quad \mathbf{x} \in \mathbb{R}^n_{+},\, t>0 \label{distrINVW}
\end{equation}
where $a_{(n)} = a_1 \cdot a_2 \cdots a_n$, is a probability distribution whose one-dimensional marginals coincide with $l_\beta$, that is the density law of the inverse process $\mathfrak{L}^\beta_t$, $t>0$. Indeed, for the law $p_{\beta}(\mathbf{x}, \mathbf{t}; n)$, $\beta \in (0,1)$, $(\mathbf{x}, \mathbf{t}) \in \mathbb{R}^n_+ \times \mathbb{R}^n_+$, we can show that
\begin{equation}
\int_{\mathbb{R}^k_+} p_{\beta}(\mathbf{x}, \mathbf{t}; n) d\mathbf{x} = p_{\beta}(\mathbf{y}, \mathbf{t}; n-k), \quad \mathbf{y} \in \mathbb{R}^{n-k}, \quad \mathbf{t} \in \mathbb{R}^n_+ \label{margInt}
\end{equation}
and also that
\begin{equation}
\int_{\mathbb{R}^n_+} p_{\beta}(\mathbf{x}, \mathbf{t}; n) d\mathbf{x} = 1. \label{unityInt}
\end{equation}
To this end, we recall that (see for example \citet{LE})
$$\frac{1}{\Gamma(-\alpha)} = \frac{1}{2\pi i} \int_{\mathcal{C}} ds\, e^{s} s^{\alpha}, \quad \alpha \neq 1,2, \ldots .$$
and we write
\begin{align*}
p_{\beta}(\mathbf{x}, \mathbf{t}; n) = & \frac{a_{(n)}}{(\prod_{j=1}^{n} t_j^\beta)} W_{-\beta, 1-n\beta}\left( - \sum_{j=1}^{n} \frac{a_jx_j}{t_j^\beta} \right)\\
= & \frac{a_{(n)}}{(\prod_{j=1}^{n} t_j^\beta)} \sum_{k=1}^{\infty} \frac{1}{k!} \left( - \sum_{j=1}^{n} \frac{a_jx_j}{t_j^\beta} \right)^k \frac{1}{\Gamma(-\beta k + 1 - n \beta)}\\
= & \frac{a_{(n)}}{(\prod_{j=1}^{n} t_j^\beta)} \frac{1}{2\pi i} \int_{\mathcal{C}}e^s s^{n\beta -1} \sum_{k \geq 0} \frac{1}{k!} \left(- \sum_{j=1}^{n} \frac{a_jx_j}{t_j^\beta} \right)^k s^{\beta k} ds\\
= & \frac{a_{(n)}}{(\prod_{j=1}^{n} t_j^\beta)} \frac{1}{2\pi i} \int_{\mathcal{C}}e^s s^{n\beta -1} \exp\left(- s^\beta \sum_{j=1}^{n} \frac{a_jx_j}{t_j^\beta} \right) ds .
\end{align*}
By observing that
\begin{align*}
\int_0^\infty \exp\left(- s^\beta \sum_{j=1}^{n} \frac{a_j x_j}{t_j^\beta} \right) dx_j = \frac{t^\beta_j}{a_j s^\beta} \exp\left(- s^\beta \sum_{\begin{subarray}{c} j=1\\j \neq j \end{subarray}}^{n} \frac{a_jx_j}{t_j^\beta} \right)
\end{align*}
we get that
\begin{align*}
\int_0^\infty p_{\beta}(\mathbf{x}, \mathbf{t}; n) \, dx_j = & \frac{a_{(n)}}{(\prod_{\begin{subarray}{c} j=1\\j \neq l \end{subarray}}^{n} t_j^{\beta})}\frac{1}{a_l} \frac{1}{2\pi i} \int_{\mathcal{C}}e^s s^{(n-1)\beta -1} \exp\left(- s^\beta \sum_{\begin{subarray}{c} j=1\\j \neq l \end{subarray}}^{n} \frac{a_jx_j}{t_j^\beta} \right) ds\\
= & \frac{a_{(n)}}{(\prod_{\begin{subarray}{c} j=1\\j \neq l \end{subarray}}^{n} t_j^{\beta})} \frac{1}{a_l}  \sum_{k \geq 0} \frac{1}{k!} \left(-  \sum_{\begin{subarray}{c} j=1\\j \neq l \end{subarray}}^{n} \frac{a_jx_j}{t_j^\beta} \right)^k \frac{1}{2\pi i} \int_{\mathcal{C}}e^s s^{(n-1)\beta -1 - \beta k} \,ds\\
= & \frac{a_{(n)}}{(\prod_{\begin{subarray}{c} j=1\\j \neq l \end{subarray}}^{n} t_j^{\beta})}  \frac{1}{a_l} \sum_{k \geq 0} \frac{1}{k!} \left(-  \sum_{\begin{subarray}{c} j=1\\j \neq l \end{subarray}}^{n} \frac{a_jx_j}{t_j^\beta} \right)^k \frac{1}{\Gamma(-\beta k + 1 - (n-1)\beta)}\\
= & \frac{a_{(n)}}{a_l \,(\prod_{\begin{subarray}{c} j=1\\j \neq l \end{subarray}}^{n} t_j^{\beta})} W_{-\beta , 1- (n-1)\beta} \left(-  \sum_{\begin{subarray}{c} j=1\\j \neq l \end{subarray}}^{n} \frac{a_jx_j}{t_j^\beta} \right)
\end{align*}
which is the marginal law of
\begin{equation*}
p_{\beta}(\mathbf{x}, \mathbf{t}; n), \quad \mathbf{x} \in \mathbb{R}^{n}_{+},\; \mathbf{t} \in \mathbb{R}^{n}_{+}
\end{equation*}
and this proves \eqref{margInt}. In order to obtain \eqref{unityInt} it suffices to consider \eqref{margInt} and the fact that $l_\beta$ integrates to unity. Indeed, we recall that
$$p_{\beta}(x, t; 1) = l_\beta(x, t) = \frac{1}{t^{\beta}} W_{-\beta, 1- \beta}\left( - \frac{x}{t^\beta} \right)$$
is the probability density on $[0, +\infty)$ of the process $\mathfrak{L}^\beta_t$, $t>0$.
\end{os}

\begin{os}
\normalfont
The distribution \eqref{distrINVW} can be regarded as the law of a process which is the inverse to the sum of stable subordinators. In particular, let us consider the stable random sheet
$$\left({_1\mathfrak{H}^\beta_{x_1}}, \ldots , {_n\mathfrak{H}^\beta_{x_n}} \right), \quad \mathbf{x} = (x_1, \ldots , x_n) \in \mathbb{R}^n_+$$
with independent stable subordinators ${_j\mathfrak{H}^\beta_{x_j}}$, $j=1,2, \ldots , n$, of order $\beta \in (0,1)$. We are interested in studying the inverse multi parameter process
$$\mathscr{X}_\beta(t)= \left( {_1X_{t}^\beta}, \ldots, {_nX_{t}^\beta}\right), \quad t>0$$ 
of the linear combination
\begin{align*}
\mathscr{H}_\beta(\mathbf{x})  & = \sqrt[\beta]{a_1}\times {_1 \mathfrak{H}^\beta_{x_1}}+ \ldots + \sqrt[\beta]{a_n}\times {_n \mathfrak{H}^\beta_{x_n}}\\
& \stackrel{law}{=} {_1 \mathfrak{H}^\beta_{a_1 x_1}} + \ldots + {_n \mathfrak{H}^\beta_{a_n x_n}}
\end{align*}
with $a_j\geq 0$, $\forall j$, in the sense that
\begin{equation}
Pr\{\mathscr{X}_\beta(t) > \mathbf{x}\} = P\{ {_1X_{t}^\beta} > x_1, \ldots {_nX_{t}^\beta} > x_n \} = P\{\mathscr{H}_\beta(\mathbf{x}) < t  \}. \label{inverseXH}
\end{equation}
We first obtain that
\begin{equation}
Pr\{ \mathscr{H}_\beta(\mathbf{x}) \in ds \}/ds = h_\beta(s, \mathbf{a}\cdot \mathbf{x}) \label{invlaXH}
\end{equation}
Indeed, from \eqref{lapHspace}, we get that
\begin{equation}
\mathbb{E} \exp\left( -\xi \mathscr{H}_\beta(\mathbf{x})  \right) = \exp\left( - \xi^\beta (\mathbf{a}\cdot \mathbf{x}) \right) = \mathbb{E}\exp\left( -\xi \mathfrak{H}^\beta_{(\mathbf{a}\cdot \mathbf{x})}\right)
\end{equation}
which means that
\begin{equation*}
\mathscr{H}_\beta(\mathbf{x}) \stackrel{law}{=} \mathfrak{H}^\beta_{\mathbf{a}\cdot \mathbf{x}}, \quad \mathbf{a}\in \mathbb{R}^n_+,\; \mathbf{x}\in \mathbb{R}^n_+.
\end{equation*}
From \eqref{inverseXH} and \eqref{invlaXH}, we find the law of $\mathscr{X}_\beta$ as follows
\begin{align*}
p_\beta(\mathbf{x}, t) = & \frac{(-\partial)^n}{\partial x_1\cdots \partial x_n} P\{ {_1X_{t}^\beta} > x_1, \ldots {_nX_{t}^\beta} > x_n \}\\
= & \frac{(-\partial)^n}{\partial x_1\cdots \partial x_n} P\{\mathscr{H}_\beta(\mathbf{x}) < t  \}\\
= & \int_0^{t} \frac{(-\partial)^n}{\partial x_1\cdots \partial x_n} P\{\mathscr{H}(\mathbf{x}) \in ds  \}\\
= & \int_0^{t} \frac{(-\partial)^n}{\partial x_1\cdots \partial x_n} \,  h_\beta(s, \mathbf{a}\cdot \mathbf{x})\, ds
\end{align*}
From \eqref{lapHspace} and the fact that
\begin{equation*}
\int_0^\infty e^{-\lambda t}\left( \int_0^{t} h_\beta(s, \mathbf{a}\cdot \mathbf{x})\, ds\right) dt = \frac{1}{\lambda} \int_0^\infty e^{-\lambda s}h_\beta(s, \mathbf{a}\cdot \mathbf{x})\, ds = \frac{1}{\lambda} e^{- (\mathbf{a}\cdot \mathbf{x}) \lambda^\beta}
\end{equation*}
we arrive at the Laplace transform 
\begin{align*}
\int_0^\infty e^{-\lambda t} p_\beta(\mathbf{x}, t) dt = & \int_0^\infty e^{-\lambda t} \left( \int_0^{t} \frac{(-\partial)^n}{\partial x_1\cdots \partial x_n} h_\beta(s , \mathbf{a}\cdot \mathbf{x})\, ds \right) dt\\
= & \frac{(-\partial)^n}{\partial x_1\cdots \partial x_n} \frac{1}{\lambda} e^{- (\mathbf{a}\cdot \mathbf{x}) \lambda^\beta}\\
= & \prod_{j=1}^n a_j \, \lambda^{n\beta -1} e^{- (\mathbf{a}\cdot \mathbf{x}) \lambda^\beta}.
\end{align*}
By recalling that $a_{(n)} = a_1\cdots a_n$ and inverting the above Laplace transform, we get that
\begin{equation*}
p_\beta(\mathbf{x}, t) = \frac{a_{(n)}}{t^{n\beta}} W_{-\beta, 1-n \beta}\left(- \frac{\mathbf{a} \cdot \mathbf{x}}{t^{\beta}} \right)
\end{equation*}
which is the claimed result.
\end{os}

We now extend the results presented in Theorem \ref{mainTheo}. Let us consider the solution to the time-fractional problem 
\begin{equation}
\Big( \partial^\beta_t + (\mathbf{a}\cdot \nabla) \Big) \mathfrak{U}_\beta^{n}(\mathbf{x}, t) = 0, \quad \mathbf{x} \in \mathbb{R}^n_{+},\; t>0,\; \beta \in (0,1) 
\end{equation}
subject to the initial and boundary conditions
$$\mathfrak{U}_\beta^{n}(\mathbf{x}, 0)=\delta(\mathbf{x}), \qquad \mathfrak{U}_\beta^{n}(\mathbf{0}, t)=t^{n - n\beta -1}_{+}, \quad n \in \mathbb{N}.$$
From Theorem \ref{mainTheo} we know that
\begin{equation}
\mathfrak{U}_\beta^{n}(\mathbf{x}, t) = t^{n-n\beta -1}W_{-\beta, n-n\beta}\left( -\frac{\mathbf{a}\cdot \mathbf{x}}{t^{\beta}} \right). \label{lawmultiL0}
\end{equation}

\begin{te}
For $n,m \in \mathbb{N}$, we have that
\begin{equation}
\mathfrak{U}_\beta^{n}(\mathbf{x}, t) * \mathfrak{U}_\beta^{m}(\mathbf{y}, t) = \mathfrak{U}_\beta^{n+m}(\mathbf{x} + \mathbf{y}, t)
\end{equation}
where $*$ stands for the Laplace convolution over $t$. Furthermore, formula \eqref{lawmultiL0} can be written as
\begin{equation}
\mathfrak{U}_\beta^{n}(\mathbf{x}, t) = (u_1 * \cdots *u_n)(\mathbf{x}, t) \label{lawmultiL}
\end{equation}
where
\begin{equation}
u_i(x_i, t) = \frac{1}{t^{\beta}}W_{-\beta, 1-\beta}\left(- \frac{a_i x_i}{t^{\beta}} \right), \quad x\geq 0, t\geq 0,\quad i=1, \ldots, n.
\end{equation}
\end{te}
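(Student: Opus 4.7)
The plan is to pass to Laplace transforms in the time variable and exploit the explicit Laplace image of functions of the form $t^{\nu}W_{-\beta,\nu+1}(-c/t^{\beta})$ that was already computed in the proof of Theorem \ref{mainTheo} (ii). Concretely, repeating the series-exchange argument used there with the boundary value $\nu=n-n\beta-1$ gives
\begin{equation*}
\widetilde{\mathfrak{U}_\beta^{n}}(\mathbf{x},\lambda)
= \lambda^{n\beta-n}\,\exp\bigl(-\lambda^{\beta}\,\mathbf{a}\cdot\mathbf{x}\bigr),
\qquad \mathbf{x}\in\mathbb{R}^{n}_{+},
\end{equation*}
which is the only input really needed from the preceding material.

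For the first identity I would simply multiply Laplace images and invoke the Laplace convolution theorem. Writing $\mathcal{L}\{f*g\}=\widetilde{f}\cdot\widetilde{g}$,
\begin{equation*}
\widetilde{\mathfrak{U}_\beta^{n}}(\mathbf{x},\lambda)\,
\widetilde{\mathfrak{U}_\beta^{m}}(\mathbf{y},\lambda)
= \lambda^{(n+m)\beta-(n+m)}\,
\exp\bigl(-\lambda^{\beta}\,\mathbf{a}\cdot(\mathbf{x}+\mathbf{y})\bigr)
= \widetilde{\mathfrak{U}_\beta^{n+m}}(\mathbf{x}+\mathbf{y},\lambda),
\end{equation*}
and inverting the Laplace transform gives $\mathfrak{U}_\beta^{n}*\mathfrak{U}_\beta^{m}=\mathfrak{U}_\beta^{n+m}(\mathbf{x}+\mathbf{y},\cdot)$.

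For the factorisation \eqref{lawmultiL} I would apply the same Laplace computation to each one-dimensional factor $u_i$, which matches the template with $\nu=-\beta$ and $\mathbf{a}\cdot\mathbf{x}$ replaced by $a_ix_i$; this yields $\widetilde{u_i}(x_i,\lambda)=\lambda^{\beta-1}\exp(-\lambda^{\beta}a_ix_i)$. Taking the product over $i=1,\dots,n$ and using $\mathbf{a}\cdot\mathbf{x}=\sum_i a_ix_i$,
\begin{equation*}
\prod_{i=1}^{n}\widetilde{u_i}(x_i,\lambda)
= \lambda^{n\beta-n}\exp\bigl(-\lambda^{\beta}\mathbf{a}\cdot\mathbf{x}\bigr)
= \widetilde{\mathfrak{U}_\beta^{n}}(\mathbf{x},\lambda),
\end{equation*}
so that Laplace inversion, combined again with the convolution theorem (iterated $n-1$ times), gives $\mathfrak{U}_\beta^{n}=u_1*\cdots*u_n$. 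The only genuinely non-routine step is the verification that the series manipulation leading to the Laplace image of $t^{\nu}W_{-\beta,\nu+1}(-c/t^{\beta})$ can be applied termwise for $\nu=n-n\beta-1$; but since this is exactly the computation already carried out in \eqref{lapx}, everything else reduces to arithmetic in the exponents.
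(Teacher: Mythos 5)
Your argument is correct, and it is a streamlined variant of the paper's own proof rather than a different method: both rest on Laplace-transform uniqueness together with the image $\int_0^\infty e^{-\lambda t}\,t^{\nu}W_{-\beta,\nu+1}(-c/t^{\beta})\,dt=\lambda^{-\nu-1}e^{-\lambda^{\beta}c}$ established in \eqref{lapx}. The difference is that you work only with the time-Laplace transform, keeping $\mathbf{x}$ as a parameter, so the whole theorem reduces to multiplying exponentials and matching powers of $\lambda$; the paper instead takes a double transform, first Laplace-transforming in the spatial variables (which produces the Mittag-Leffler images $\widetilde{u_i}(\xi_i,t)=a_i^{-1}E_{\beta}(-\xi_i t^{\beta}/a_i)$) and then in $t$, and identifies the two double transforms $\prod_i \lambda^{\beta-1}(\lambda^{\beta}a_i+\xi_i)^{-1}$. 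Your route is shorter and also yields the convolution identity $\mathfrak{U}_\beta^{n}*\mathfrak{U}_\beta^{m}=\mathfrak{U}_\beta^{n+m}$ directly, whereas the paper only proves the factorisation \eqref{lawmultiL} explicitly and leaves the first identity as an immediate consequence; the paper's double-transform detour buys the additional explicit spatial Laplace images, which it reuses in the subsequent remarks. The one point you flag yourself --- termwise integrability of the series for $\nu=n-n\beta-1$, where $\int_0^\infty e^{-\lambda t}t^{\nu-\beta k}\,dt$ diverges at $t=0$ for large $k$ --- is treated just as formally in the paper (a contour-integral representation of $W_{-\beta,\nu+1}$ would make it rigorous), so you are not assuming anything the paper does not.
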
 
\begin{proof}
We first assume that formula \eqref{lawmultiL} holds true and we write
$$\phi(\mathbf{x}, t) = (u_1 * \cdots *u_n)(\mathbf{x}, t).$$
We have that
\begin{align*} 
\widetilde{\phi}(\boldsymbol{\xi}, t) = & \int_{0}^\infty \cdots \int_0^\infty e^{- \boldsymbol{\xi} \cdot \mathbf{x}} \phi(\mathbf{x}, t)\, d\xi_1 \cdots d\xi_n \\
= & \int_0^{\infty} \cdots \int_{0}^{\infty} e^{-\sum_{i=1}^n \xi_i x_i}  (u_1*\cdots *u_n)(x_1, \ldots , x_n, t)\, dx_1 \cdots dx_n\\
= & (\widetilde{u_1}*\cdots *\widetilde{u_2})(\xi_1 , \ldots , \xi_n , t)
\end{align*}
where, from \eqref{wright} and \eqref{mittag-leffler}, 
\begin{equation}
\widetilde{u_i}(\xi_i, t) = \int_0^\infty e^{-\xi_i x_i} u_i(x_i, t) dx_i = \frac{1}{a_i} E_{\beta}\left(- \frac{\xi_i t^{\beta}}{a_i}\right)
\end{equation}
and thus
\begin{align}
\widetilde{\widetilde{\phi}}(\boldsymbol{\xi}, \lambda) = & \int_0^\infty e^{-\lambda t}\, \widetilde{\phi}(\boldsymbol{\xi}, t)\, dt = \prod_{i=1}^{n} \frac{\lambda^{\beta -1}}{(\lambda^{\beta}a_i + \xi_i )}. \label{coinWnu}
\end{align}
Now we consider the function
$$\varphi(\mathbf{x}, t) = t^{n-n\beta -1}W_{-\beta, n-n\beta}\left( \frac{\mathbf{a}\cdot \mathbf{x}}{t^{\beta}} \right).$$
From  \eqref{wright} we have that
\begin{equation}
\widetilde{\varphi}(\mathbf{x}, \lambda) = \int_0^\infty e^{-\lambda t}\, \varphi(\mathbf{x}, t)\,dt = \lambda^{n\beta - n} e^{- \lambda^{\beta} (\mathbf{a}\cdot \mathbf{x})}.
\end{equation}
As we can immediately check
\begin{equation}
\int_{\mathbb{R}^n_+} e^{-\boldsymbol{\xi}\cdot \mathbf{x}} \widetilde{\varphi}(\mathbf{x}, \lambda)\, d\mathbf{x} = \frac{\lambda^{n\beta -n}}{\prod_{i=1}^n (\xi_i + \lambda^\beta a_i)} = \prod_{i=1}^n \frac{\lambda^{\beta -1}}{(\xi_i + \lambda^\beta a_i)}
\end{equation}
coincides with \eqref{coinWnu} and therefore $\phi=\varphi$. This concludes the proof.
\end{proof}

\begin{os}
\normalfont
For $n=2$ we have that
\begin{align*}
\mathfrak{U}_\beta^{2}(\mathbf{x}, t) = & \int_0^t \frac{1}{s^{\beta}}W_{-\beta, 1-\beta}\left(- \frac{a_1 x_1}{s^{\beta}}\right) \frac{1}{(t-s)^{\beta}}W_{-\beta, 1-\beta}\left(-\frac{a_2 x_2}{(t-s)^{\beta}} \right)\, ds\\
= & \sum_{i=0}^{\infty}\sum_{j=0}^{\infty} \frac{(-a_1 x_1)^i}{i!\, \Gamma(1 -\beta i - \beta)}\frac{(- a_2 x_2)^j}{j!\, \Gamma(1 -\beta j - \beta)}\int_0^t s^{- \beta i - \beta} (t-s)^{-\beta j - \beta}\, ds\\
= & \sum_{i=0}^{\infty}\sum_{j=0}^{\infty} \frac{(- a_1 x_1)^i}{i!\, \Gamma(1 -\beta i - \beta)}\frac{(- a_2 x_2)^j}{j!\, \Gamma(1 -\beta j - \beta)} t^{1-\beta(i+j+2)} B(1-\beta i - \beta, 1-\beta j -\beta)\\
= & \sum_{i=0}^{\infty}\sum_{j=0}^{\infty} \frac{(- a_1 x_1)^i}{i!}\frac{(- a_2 x_2)^j}{j!} \frac{t^{1-\beta(i+j+2)} }{\Gamma(2-2\beta -\beta i - \beta j)}
\end{align*}
where
\begin{equation*}
B(w_1, w_2) = \frac{\Gamma(w_1)\Gamma(w_2)}{\Gamma(w_1+w_2)}
\end{equation*}
is the Beta function. From the fact that (see \citet[p. 115]{LE})
$$ \frac{1}{\Gamma(\alpha)} = \frac{1}{2\pi i} \int_{\mathcal{C}} s^{-\alpha} e^s\, ds $$
we can write
\begin{align*}
\mathfrak{U}_\beta^{2}(\mathbf{x}, t) = &  t^{1- 2 \beta} \sum_{l=0}^{\infty}\sum_{j=0}^{\infty} \frac{(- a_1 x_1 /t^\beta )^l}{l!}\frac{(- a_2 x_2 / t^{\beta})^j}{j!} \frac{1}{2\pi i} \int_{\mathcal{C}} s^{2\beta + \beta l + \beta j -2} e^s\, ds\\
= & t^{1- 2 \beta} \frac{1}{2\pi i} \int_{\mathcal{C}} \sum_{l=0}^{\infty}\sum_{j=0}^{\infty} \frac{(- a_1 x_1 s^\beta /t^\beta)^l}{l!}\frac{(- a_2 x_2 s^{\beta}/t^\beta)^j}{j!} \, s^{2\beta -2} e^s\, ds\\
= & t^{1- 2 \beta} \frac{1}{2\pi i} \int_{\mathcal{C}} e^{- \frac{(a_1 x_1 + a_2 x_2)}{t^\beta} s^{\beta}} s^{2\beta -2}\, ds\\
= & t^{1- 2 \beta} \frac{1}{2\pi i} \int_{\mathcal{C}} \sum_{k=0}^{\infty} \frac{(- (a_1 x_1 + a_2 x_2)/t^\beta)^k}{k!} s^{\beta k} s^{2\beta -2}\, ds\\
= & t^{1- 2 \beta} \sum_{k=0}^{\infty} \frac{(- (a_1 x_1 + a_2 x_2)/t^{\beta})^k}{k!\, \Gamma(- \beta k + 2 - 2 \beta)}\\
= & \frac{1}{t^{2\beta - 1}} W_{-\beta, 2- 2\beta}\left( - \frac{a_1 x_1+a_2x_2}{t^\beta} \right)
\end{align*}
which is in accord with \eqref{lawmultiL0}.
\end{os}

\begin{os}
\normalfont
We can obtain the previous result by considering that
\begin{equation*}
\widetilde{\mathfrak{U}_\beta^{2}}(\boldsymbol{\xi}, t) = \int_0^t \widetilde{u_1}(\xi_1, s)\, \widetilde{u_2}(\xi_2, t-s)\, ds
\end{equation*}
where
\begin{equation*}
\widetilde{u\,_{j}}(\xi_j, t) = E_{\beta}(-\xi_j\, t^{\beta}/a_j) / a_j, \quad j=1,2.
\end{equation*}
Thus, we arrive at
\begin{align*}
a_1a_2\int_0^t E_{\beta}(-\xi_2 s^{\beta}/a_2)\, E_{\beta}(-\xi_1 (t-s)^{\beta}/a_1)\, ds = & \int_0^\infty \int_0^\infty  e^{-\gamma_1 - \gamma_2} \Sigma(\gamma_1, \gamma_2)\, \, d\gamma_1\, d\gamma_2
\end{align*}
where
\begin{align*}
\Sigma(\gamma_1, \gamma_2) = & \sum_{l=0}^{\infty}\sum_{j=0}^{\infty} \frac{(-\xi_2 \gamma_2 / a_2 )^l}{l!\Gamma(i\beta + 1)}\frac{(-\xi_1 \gamma_1 /a_1)^j}{j! \Gamma(j\beta +1)} \int_0^t s^{l \beta} (t-s)^{j\beta}\, ds\\
= & \sum_{l=0}^{\infty}\sum_{j=0}^{\infty} \frac{(-\xi_2 \gamma_2 /a_2)^l}{l!\Gamma(i\beta + 1)}\frac{(-\xi_1 \gamma_1 /a_1)^j}{j! \Gamma(j\beta +1)} t^{\beta(l+j) + 1} \int_0^1 s^{l \beta } (1-s)^{j\beta}\, ds \\
= &  \sum_{l=0}^{\infty}\sum_{j=0}^{\infty} \frac{(-\xi_2 \gamma_2 / a_2)^l}{l!\Gamma(l\beta + 1)}\frac{(-\xi_1 \gamma_1 / a_1)^j}{j! \Gamma(j\beta +1)} t^{\beta(l+j)+1} B(l\beta + 1, j\beta +1) \\
= & \sum_{l=0}^{\infty}\sum_{j=0}^{\infty} \frac{(-\xi_2 \gamma_2 /a_2)^l}{l!}\frac{(-\xi_1 \gamma_1 /a_1)^j}{j!}  \frac{t^{\beta(l+j)+1}}{\Gamma(2+ \beta(l+j))}  \\
= &  \sum_{l=0}^{\infty}\sum_{j=0}^{\infty} \frac{(-\xi_2 \gamma_2 /a_2)^l}{l!}\frac{(-\xi_1 \gamma_1 /a_1)^j}{j!} \frac{1}{2\pi i} \int_{\mathcal{C}} s^{-1} e^s \left( t/s \right)^{\beta(l+j)+1} ds\\
= &  \frac{1}{2\pi i} \int_{\mathcal{C}} s^{-1} e^s (t/s) \sum_{l=0}^{\infty}\sum_{j=0}^{\infty} \frac{(-\xi_2 /a_2 \gamma_2 t^{\beta}/s^{\beta})^l}{l!}\frac{(-\xi_1 /a_1 \gamma_1 t^{\beta}/s^{\beta})^j}{j!}\, ds\\
= & \frac{t}{2\pi i} \int_{\mathcal{C}} s^{-2} \exp\left( s-\frac{t^\beta}{s^{\beta}}(\xi_1 \gamma_1 /a_1+ \xi_2 \gamma_2 /a_2)\right) ds.
\end{align*}
By observing that
\begin{align*}
&\frac{t}{2\pi i} \int_{\mathcal{C}} s^{-2} \exp\left( s-\frac{t^\beta}{s^{\beta}}(\xi_1 \gamma_1 /a_1+ \xi_2 \gamma_2 /a_2)\right) ds \\
= & \frac{1}{2\pi i} \int_{\mathcal{C}^{\prime}} e^{st} s^{-2} \exp\left(-\frac{(\xi_1 \gamma_1 / a_1+ \xi_2 \gamma_2 / a_2)}{s^{\beta}}\right) ds
\end{align*}
where $\mathcal{C}^{\prime}$ is similar to $\mathcal{C}$ (see \cite{LE}), we obtain that
\begin{align*}
\widetilde{\mathfrak{U}_\beta^{2}}(\boldsymbol{\xi}, t) = & \frac{1}{a_1a_2}\int_0^\infty \int_0^\infty e^{- \gamma_1 - \gamma_2} \frac{1}{2\pi i} \int_{\mathcal{C}^{\prime}} e^{st} s^{-2} \exp\left(-\frac{(\xi_1 \gamma_1 /a_1+ \xi_2 \gamma_2 / a_2)}{s^{\beta}}\right) ds\, d\gamma_1\, d\gamma_2\\
= & \frac{1}{2\pi i} \int_{\mathcal{C}^{\prime}} e^{st} s^{-2} \frac{1/a_1}{1 + \frac{\xi_1 /a_1}{s^\beta}} \frac{1/a_2}{1 + \frac{\xi_2 /a_2}{s^\beta}}ds\\
= & \frac{1}{2\pi i} \int_{\mathcal{C}^{\prime}} e^{st} s^{2\beta -2} \frac{1/a_1}{s^\beta + \xi_1 /a_1} \frac{1/a_2}{s^\beta + \xi_2 /a_2}.
\end{align*}
By inverting the Laplace transform we get the Laplace convolution of two Mittag-Leffler functions $\widetilde{\mathfrak{U}^2_\beta}$. 
\end{os}

\section{Second order operators}
\label{secIV}

In the previous sections we have studied the fractional power of the first order operator \eqref{dirder}. Here we present some result on the squared power of \eqref{dirder} and  the fractional power $\vartheta \in (0,1)$ of the negative Laplace operator $-\triangle$ which has been first investigated by \citet{Boch49, Feller52} and many other authors so far. 

Let us introduce the fractional power of \eqref{dirder} formally written as
\begin{equation} 
|\mathbf{a} \cdot \nabla |^{\alpha} = \left( (\mathbf{a} \cdot \nabla)^2 \right)^{\frac{\alpha}{2}} =\left( \sum_{i=0}^{n} \sum_{j=0}^n a_{ij}\frac{\partial^2}{\partial x_i \partial x_j} \right)^{\frac{\alpha}{2}}
\end{equation} 
with $a_{ij}=a_i a_j$, $1\leq i,j \leq n$ and acting on the space of twice differentiable functions defined on $\mathbb{R}^n$. We present some results for $\alpha =2$. As pointed out by \citet{MBB99, MerMorSch04}, for $\alpha=2$, formula \eqref{furMeer} leads to the directional derivative
\begin{equation}
\mathbb{D}^2_M f(\mathbf{x}) = \nabla A \nabla f(\mathbf{x}) = \sum_{i=1}^n \sum_{j=1}^n a_{i,j} \frac{\partial^2 f(\mathbf{x})}{\partial x_i \partial x_j}
\end{equation}
where $A=\{a_{ij}\}_{1 \leq i,j \leq n}$ and 
$$a_{ij} = \int_{\|\boldsymbol{\theta}\|=1} \theta_i \theta_j M(d\boldsymbol{\theta}). $$

\begin{te}
The solution to
\begin{equation} 
\left( \frac{\partial^\beta}{\partial t^\beta} - (\mathbf{a} \cdot \nabla)^2 \right) g(\mathbf{x}, t)=0, \quad (\mathbf{x},t) \in \mathbb{R}^n \times (0, +\infty) \label{op2Ord}
\end{equation}
subject to the initial condition $g(\mathbf{x}, 0)=\delta(\mathbf{x})$ is given by
\begin{equation}
g(\mathbf{x}, t) = \frac{1}{t^{\beta/2}}W_{-\frac{\beta}{2}, 1- \frac{\beta}{2}} \left(- \frac{|\mathbf{a} \cdot \mathbf{x}|}{t^{\beta/2}} \right). \label{sol2Ord}
\end{equation}
Furthermore, formula \eqref{sol2Ord} can be written as
\begin{equation}
g(\mathbf{x}, t) = \int_0^\infty \frac{e^{-\frac{(\mathbf{a} \cdot \mathbf{x})^2}{4s}}}{\sqrt{4\pi s}}l_{\beta}(s, t)ds \label{sol2Ord2}
\end{equation}
where
\begin{equation*}
l_\beta(s, t) = \frac{1}{t^\beta} W_{-\beta, 1-\beta}\left(- \frac{s}{t^\beta} \right).
\end{equation*}
\end{te}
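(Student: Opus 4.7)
My plan is to pass to the Fourier transform, solve the resulting time-fractional ODE via the Mittag-Leffler function, and then invert in two different ways to obtain the two stated expressions.

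\textbf{Step 1: Fourier representation.} Using the convention $\widehat{f}(\boldsymbol{\xi})=\int_{\mathbb{R}^n} e^{i\boldsymbol{\xi}\cdot\mathbf{x}} f(\mathbf{x})\, d\mathbf{x}$, the symbol of $(\mathbf{a}\cdot\nabla)^{2}$ is $(-i\mathbf{a}\cdot\boldsymbol{\xi})^{2}=-(\mathbf{a}\cdot\boldsymbol{\xi})^{2}$ (this is the $\alpha=2$ analogue of Lemma \ref{lemmaFurDirDer}, and also follows directly by differentiating twice under the integral sign). Hence \eqref{op2Ord} transforms into
\begin{equation*}
\frac{\partial^{\beta}\widehat{g}}{\partial t^{\beta}}(\boldsymbol{\xi},t)+(\mathbf{a}\cdot\boldsymbol{\xi})^{2}\,\widehat{g}(\boldsymbol{\xi},t)=0,\qquad \widehat{g}(\boldsymbol{\xi},0)=1.
\end{equation*}
Since $E_{\beta}$ is an eigenfunction of the Dzerbayshan-Caputo derivative (as noted after \eqref{relaxEq}), I would immediately write
\begin{equation*}
\widehat{g}(\boldsymbol{\xi},t)=E_{\beta}\bigl(-t^{\beta}(\mathbf{a}\cdot\boldsymbol{\xi})^{2}\bigr).
\end{equation*}

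\textbf{Step 2: Subordinated Gaussian representation \eqref{sol2Ord2}.} I would invoke the Laplace-transform identity \eqref{lapLspace}, $E_{\beta}(-t^{\beta}\eta)=\int_{0}^{\infty}e^{-s\eta}\,l_{\beta}(s,t)\,ds$, with $\eta=(\mathbf{a}\cdot\boldsymbol{\xi})^{2}\ge 0$, to get
\begin{equation*}
\widehat{g}(\boldsymbol{\xi},t)=\int_{0}^{\infty} e^{-s(\mathbf{a}\cdot\boldsymbol{\xi})^{2}}\,l_{\beta}(s,t)\,ds.
\end{equation*}
Recognising $e^{-s(\mathbf{a}\cdot\boldsymbol{\xi})^{2}}$ as the Fourier transform (in the one-dimensional variable $\mathbf{a}\cdot\mathbf{x}$) of the Gaussian kernel $(4\pi s)^{-1/2}\exp(-(\mathbf{a}\cdot\mathbf{x})^{2}/(4s))$, Fubini gives the mixture expression \eqref{sol2Ord2}.

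\textbf{Step 3: Wright-function representation \eqref{sol2Ord}.} I would take the Laplace transform in $t$ of \eqref{sol2Ord2}. Writing $y=\mathbf{a}\cdot\mathbf{x}$ and using that the $t$-Laplace transform of $l_{\beta}(s,t)$ is $\lambda^{\beta-1}e^{-s\lambda^{\beta}}$, Fubini reduces the computation to the classical Gaussian-exponential integral
\begin{equation*}
\int_{0}^{\infty}\frac{e^{-y^{2}/(4s)-s\mu}}{\sqrt{4\pi s}}\,ds=\frac{e^{-|y|\sqrt{\mu}}}{2\sqrt{\mu}},\qquad \mu>0,
\end{equation*}
applied with $\mu=\lambda^{\beta}$. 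The outcome is $\widetilde{g}(\mathbf{x},\lambda)\propto \lambda^{\beta/2-1}e^{-|y|\lambda^{\beta/2}}$. On the other hand, repeating the series-based Laplace computation \eqref{lapx} of Theorem \ref{mainTheo}(ii) but with $\beta$ replaced by $\beta/2$ and $\nu=-\beta/2$ yields
\begin{equation*}
\int_{0}^{\infty}e^{-\lambda t}\,t^{-\beta/2}W_{-\beta/2,\,1-\beta/2}\!\left(-\frac{|y|}{t^{\beta/2}}\right)dt=\lambda^{\beta/2-1}e^{-|y|\lambda^{\beta/2}}.
\end{equation*}
Matching the two Laplace transforms and using uniqueness of Laplace inversion yields \eqref{sol2Ord}.

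\textbf{Main obstacle.} The delicate point is the passage from the subordinated Gaussian form \eqref{sol2Ord2} to the closed Wright-function form \eqref{sol2Ord}: it requires combining the Gaussian-exponential Laplace integral with the series identity for the Wright function that was already exploited in Theorem \ref{mainTheo}(ii). Everything else—the Fourier derivation of the Mittag-Leffler transform and the subordination step—is essentially routine once the machinery of Section \ref{secIII} is in place.
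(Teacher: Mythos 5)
Your proposal is correct and follows essentially the same route as the paper: the Fourier symbol $-(\mathbf{a}\cdot\boldsymbol{\xi})^2$ together with the Mittag-Leffler eigenfunction property gives $\widehat{g}(\boldsymbol{\xi},t)=E_\beta\left(-t^\beta(\mathbf{a}\cdot\boldsymbol{\xi})^2\right)$, the subordination identity \eqref{lapLspace} produces the Gaussian mixture \eqref{sol2Ord2}, and the identification with \eqref{sol2Ord} is carried out by matching Laplace transforms in $t$, your Gaussian--exponential integral being exactly the $K_{1/2}$ Bessel identity that the paper invokes. One caveat: the constant hidden in your ``$\propto$'' is $\tfrac{1}{2}$, so the two displayed forms \eqref{sol2Ord} and \eqref{sol2Ord2} as written actually differ by a factor of $2$ --- the paper itself silently compares the mixture with $\tfrac{1}{2}$ times the Wright expression, consistent with its later remark that $2(4\pi s)^{-1/2}e^{-x^2/4s}=l_{1/2}(|x|,s)$ --- so the normalisation should be tracked explicitly rather than absorbed into a proportionality sign.
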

\begin{proof}
We recall that
\begin{equation*}
\int_0^\infty e^{-\lambda x} l_{\beta}(x, t)dx = \lambda^{\beta -1} e^{-x \lambda^\beta}.
\end{equation*}
From the fact that (see \citet{LE})
\begin{equation*} 
 \frac{1}{2}\left( \frac{z^2}{4 \gamma}\right)^\frac{\nu}{2} \int_0^\infty e^{-s\gamma -\frac{z^2}{4s}} s^{-\nu-1}ds =K_{\nu}\left(|z| \gamma^{\frac{1}{2}}\right) , \quad z \in \mathbb{R}, \; \gamma >0
\end{equation*}
where $K_\nu$ is the modified Bessel function of imaginary argument and (\cite{LE})
\begin{equation*}
K_{\frac{1}{2}}(z) = \sqrt{\frac{\pi}{2z}}e^{-z}
\end{equation*} 
we obtain that
\begin{align*}
& \int_0^\infty e^{-\lambda t} \int_0^\infty \frac{e^{-\frac{(\mathbf{a} \cdot \mathbf{x})^2}{4s}}}{\sqrt{4\pi s}}l_{\beta}(s, t)ds \, dt \\
= & \frac{\lambda^{\beta -1}}{\sqrt{4\pi}} \int_0^\infty s^{\frac{1}{2}-1}e^{-\frac{(\mathbf{a} \cdot \mathbf{x})^2}{4s} - s \lambda^{\beta}}ds\\
= & \frac{\lambda^{\beta -1}}{\sqrt{\pi}} \left( \frac{(\mathbf{a} \cdot \mathbf{x})^2}{4 \lambda^{\beta}} \right)^{\frac{1}{4}} K_{\frac{1}{2}}\left( |\mathbf{a} \cdot \mathbf{x}| \lambda^{\frac{\beta}{2}}\right)\\
= & \frac{\lambda^{\beta -1}}{\sqrt{\pi}} \left( \frac{(\mathbf{a} \cdot \mathbf{x})^2}{4 \lambda^{\beta}} \right)^{\frac{1}{4}} \sqrt{\frac{\pi}{2}} \left( |\mathbf{a} \cdot \mathbf{x}| \lambda^{\frac{\beta}{2}}\right)^{-\frac{1}{2}} \exp\left( - |\mathbf{a} \cdot \mathbf{x}| \lambda^{\frac{\beta}{2}}\right)\\
= & \frac{1}{2} \lambda^{\frac{\beta}{2} - 1} \exp\left( - |\mathbf{a} \cdot \mathbf{x}| \lambda^{\frac{\beta}{2}}\right)
\end{align*}
which coincides with
\begin{align*}
& \int_0^\infty e^{-\lambda t} \frac{1}{2t^{\beta/2}}W_{-\frac{\beta}{2}, 1- \frac{\beta}{2}} \left(- \frac{|\mathbf{a} \cdot \mathbf{x}|}{t^{\beta/2}} \right) dt \\
= & \frac{1}{2} \sum_{k=0}^{\infty}  \frac{( - |\mathbf{a} \cdot \mathbf{x}|)^k}{k!}  \int_0^\infty  \frac{e^{-\lambda t} t^{-\beta/2 -\frac{\beta k}{2}}\, dt}{\Gamma\left(-\frac{\beta k}{2} + 1 - \frac{\beta}{2}\right)} \\
= & \frac{\lambda^{\frac{\beta}{2} -1}}{2}\sum_{k=0}^{\infty}  \frac{( - |\mathbf{a} \cdot \mathbf{x}| \lambda^{\frac{\beta}{2}})^k}{k!}\\
= & \frac{1}{2} \lambda^{\frac{\beta}{2} - 1} \exp\left( - |\mathbf{a} \cdot \mathbf{x}| \lambda^{\frac{\beta}{2}}\right).
\end{align*}
This prove that formula \eqref{sol2Ord} can be rewritten as \eqref{sol2Ord2}. Formula \eqref{sol2Ord2} can be rewritten as follows
\begin{align}
g(\mathbf{x}, t) = & \int_0^\infty \left( \frac{1}{2\pi} \int_{-\infty}^{+\infty} e^{-i \gamma \, (\mathbf{a}\cdot \mathbf{x})} e^{-\gamma^2 s}d\gamma \right) \, l_{\beta}(s, t)ds \notag \\
= & \frac{1}{2\pi} \int_{-\infty}^{+\infty} e^{-i \gamma \, (\mathbf{a}\cdot \mathbf{x})}\left( \int_0^\infty  e^{-\gamma^2 s}l_{\beta}(s, t)ds \right)d\gamma \notag \\
= & \frac{1}{2\pi} \int_{-\infty}^{+\infty} e^{-i \gamma \, (\mathbf{a}\cdot \mathbf{x})} E_{\beta}\left( -t^\beta \gamma^2 \right) d\gamma. \label{formulagP}
\end{align}

\begin{equation*}
\frac{\partial^\beta}{\partial t^\beta} \frac{1}{2\pi} \int_{-\infty}^{+\infty} e^{-i \gamma \, (\mathbf{a}\cdot \mathbf{x})} E_{\beta}\left( -t^\beta \gamma^2 \right) d\gamma = \frac{1}{2\pi} \int_{-\infty}^{+\infty} e^{-i \gamma \, (\mathbf{a}\cdot \mathbf{x})} (-\gamma^2) E_{\beta}\left( -t^\beta \gamma^2 \right) d\gamma
\end{equation*}
which coincides with
\begin{equation*}
(\mathbf{a} \cdot \nabla)^2 \frac{1}{2\pi} \int_{-\infty}^{+\infty} e^{-i \gamma \, (\mathbf{a}\cdot \mathbf{x})} E_{\beta}\left( -t^\beta \gamma^2 \right) d\gamma
\end{equation*}
by considering that $\|\mathbf{a}\|=1$ and provided that $\gamma^2 E_{\beta}\left( -t^\beta \gamma^2 \right) \in L^1(d\gamma)$. By taking the Fourier transform of \eqref{formulagP} we obtain
\begin{align*}
\widehat{g}(\boldsymbol{\xi}, t) = & \int_{\mathbb{R}^n} e^{i \boldsymbol{\xi}\cdot \mathbf{x}} g(\mathbf{x}, t)\, d\mathbf{x} = \int_{-\infty}^{+\infty} \delta(\boldsymbol{\xi} - \gamma \mathbf{a}) E_{\beta}\left( -t^\beta \gamma^2 \right) d\gamma
\end{align*}
where $\boldsymbol{\xi} = \gamma \mathbf{a}$ if and only if $\mathbf{a}\cdot \boldsymbol{\xi} = \gamma \|\mathbf{a}\|^2= \gamma$ and therefore we get
\begin{align*}
\widehat{g}(\boldsymbol{\xi}, t) = &  \int_{-\infty}^{+\infty} \delta(\mathbf{a}\cdot \boldsymbol{\xi} - \gamma) E_{\beta}\left( -t^\beta \gamma^2 \right) d\gamma = E_{\beta}\left( -t^\beta (\mathbf{a}\cdot \boldsymbol{\xi})^2 \right).
\end{align*}
From the fact that
\begin{equation*}
\int_{-\infty}^{+\infty} e^{i \,\boldsymbol{\xi} \cdot \mathbf{x}}\, (\mathbf{a}\cdot \nabla)^2 u(x)\, dx = \left( - i\, \mathbf{a} \cdot \boldsymbol{\xi} \right)^2 \int_{-\infty}^{+\infty} e^{i \,\boldsymbol{\xi} \cdot \mathbf{x}}u(x)\, dx = -\left( \mathbf{a} \cdot \boldsymbol{\xi} \right)^2\, \widehat{u}(\boldsymbol{\xi})
\end{equation*}
$u \in Dom\, (\mathbf{a}\cdot \nabla)$, we get that the equation \eqref{op2Ord} becomes
\begin{equation*}
\frac{\partial^\beta \widehat{g}}{\partial t^\beta}(\boldsymbol{\xi}, t) = - \left( \mathbf{a} \cdot \boldsymbol{\xi} \right)^2\, \widehat{g}(\boldsymbol{\xi}, t)
\end{equation*}
which leads to
\begin{equation*}
\widehat{g}(\boldsymbol{\xi}, t) = E_{\beta}\left(-t^\beta \, \left( \mathbf{a} \cdot \boldsymbol{\xi} \right)^2 \right)
\end{equation*}
by taking into account the initial condition $g(\mathbf{x}, 0) = \delta(\mathbf{x})$. We recall that the Mittag-Leffler function is an eigenfunction for the D-C fractional  derivative $\frac{\partial^\beta}{\partial t^\beta}$ (see formula \eqref{relaxEq}). This concludes the proof.
\end{proof}

\begin{os}
\normalfont
We notice that
\begin{equation*}
2 \frac{e^{-\frac{x^2}{4s}}}{\sqrt{4\pi s}} = \frac{1}{t^{1/2}}W_{-\frac{1}{2}, 1-\frac{1}{2}}\left( -\frac{|x|}{t^{1/2}} \right) = l_{\frac{1}{2}}(|x|, t)
\end{equation*}
and therefore we can write the function \eqref{sol2Ord} as follows
\begin{equation*}
g(\mathbf{x}, t) = \int_{0}^{\infty} l_{\frac{1}{2}}(|\mathbf{a}\cdot \mathbf{x}|, s) \, l_{\beta}(s, t)\, ds.
\end{equation*}
Furthermore, the one-dimensional version of \eqref{op2Ord} is the fractional diffusion equation
\begin{equation*} 
\left( \frac{\partial^\beta}{\partial t^\beta} - \frac{\partial^2}{\partial x^2} \right) g(x, t)=0, \quad x \in \mathbb{R},\; t>0 
\end{equation*}
subject to the initial condition $g(x, 0)=\delta(x)$ whose stochastic solution is represented by the subordinated Brownian motion
\begin{equation*}
B(\mathfrak{L}^\beta_t), \quad t>0
\end{equation*}
as shown by \citet{OB09, BMN09ann}. Furthermore, we notice that $g(\mathbf{x}, t) = l_\frac{\beta}{2}(|\mathbf{a}\cdot \mathbf{x}|, t)$ and solves \eqref{op2Ord} whereas, $l_\frac{\beta}{2}(\mathbf{a}\cdot \mathbf{x}, t)$ solves
\begin{equation} 
\left( \frac{\partial^\frac{\beta}{2}}{\partial t^\frac{\beta}{2}} - (\mathbf{a} \cdot \nabla) \right) l_\frac{\beta}{2}(\mathbf{a}\cdot \mathbf{x}, t)=0, \quad (\mathbf{x},t) \in \mathbb{R}^n_+ \times (0, +\infty)
\end{equation}
as shown in Theorem \ref{mainTheo}.
\end{os}

Let us consider $u \in \mathscr{S}$ where $\mathscr{S}$ is the Schwartz space of rapidly decaying $C^\infty$ functions in $\mathbb{R}^n$. The fractional power $\vartheta \in (0,1)$ of the negative Laplace operator $-\triangle$ is defined as follows
\begin{align*}
(-\triangle)^\vartheta u(\mathbf{x}) = & C\, p.v.\, \int_{\mathbb{R}^n} \frac{u(\mathbf{x}) - u(\mathbf{y})}{|\mathbf{x} - \mathbf{y}|^{n+2\vartheta}} d\mathbf{y}\\
= & - \frac{C}{2} \int_{\mathbb{R}^n} \frac{u(\mathbf{x} + \mathbf{y}) + u(\mathbf{x}- \mathbf{y}) - 2u(\mathbf{x})}{|\mathbf{y}|^{n+2\vartheta}} d\mathbf{y}
\end{align*}
where $C$ is a constant depending on $(\vartheta,n)$ and, "$p.v.$" stands for "principal value". An alternative way (in the space of Fourier transforms) to define the fractional power of $-\triangle$ is 
\begin{equation*} 
 -(-\triangle)^\vartheta u(\mathbf{x}) = \frac{1}{2\pi} \int_{\mathbf{R}} e^{-i \boldsymbol{\xi} \cdot \mathbf{x}} \|\boldsymbol{\xi} \|^{2\vartheta} \widehat{u}(\boldsymbol{\xi})\, d\boldsymbol{\xi}.
\end{equation*} 
Indeed, we have that
\begin{equation}
\int_{\mathbb{R}^n} e^{i\boldsymbol{\xi}\cdot \mathbf{x}} (-\triangle)^\vartheta u(\mathbf{x}) \, d\mathbf{x} = \| \boldsymbol{\xi}\|^{2\vartheta} \, \widehat{u}(\boldsymbol{\xi}).\label{furFLAP}
\end{equation}

The solution to the fractional equation
\begin{equation}
\frac{\partial u}{\partial t}(\mathbf{x}, t) = - (-\triangle)^\vartheta u(\mathbf{x}, t), \quad \mathbf{x} \in \mathbb{R}^n,\; t>0 \label{fracLapPDE}
\end{equation}
in the Sobolev space
\begin{equation}
Dom\left(-(- \triangle)^\vartheta \right) = \left\lbrace u \in L^2(\mathbb{R}^n)\, : \int_{\mathbb{R}^n} (1+\|\boldsymbol{\xi}\|^{2\vartheta}) |\widehat{u}(\boldsymbol{\xi})|^2 d\boldsymbol{\xi} < \infty \right\rbrace
\end{equation}
and subject to the initial condition $u_0(\mathbf{x}) = \delta(\mathbf{x})$, represents the law of an isotropic $\mathbb{R}^n$-valued stable process, say $\mathbf{S}_{2\vartheta}(t)$, $t>0$, with characteristic function
\begin{equation}
\mathbb{E} \,e^{i\boldsymbol{\xi} \cdot \mathbf{S}_{2\vartheta}(t)} = e^{- t \| \boldsymbol{\xi} \|^{2\vartheta}}.
\end{equation}
Thus, from our viewpoint, the isotropic stable process $\mathbf{S}_{2\vartheta}(t)$, $t>0$, $\vartheta \in (0,1)$ is the stochastic solution to \eqref{fracLapPDE} subject to the initial datum $u_0=\delta$. Obviously, for $\vartheta=1$, we obtain $\mathbf{S}_{2}(t)=\mathbf{B}(t)$, $t>0$ which is the $n$-dimensional Brownian motion. 

We now present the main result of this section.
\begin{te}
For $\vartheta \in (0,1)$, $\alpha, \beta \in (0,1)$, $\mathbf{a}\in \mathbb{R}^n_+$ such that $\| \mathbf{a}\| =1$, the solution $w=w(\mathbf{x},t)$ to the fractional equation
\begin{equation}
\frac{\partial^\beta w}{\partial t^\beta} = -(- \triangle)^\vartheta w - (\mathbf{a}\cdot \nabla)^\alpha w\label{eqDifAdv}
\end{equation}
for $(\mathbf{x}, t) \in \mathbb{R}^n \times (0, +\infty)$, subject to the initial condition $w_0=\delta$, is given by
\begin{equation}
w(\mathbf{x}, t) = \int_0^\infty dz \int_{\mathbb{R}^n_+} d\mathbf{s} \,\mathcal{T}_{2\vartheta}(\mathbf{x} - \mathbf{s}, z) \, h_\alpha(\mathbf{a}\cdot \mathbf{s}, z)\, l_\beta(z, t) \label{solDifAdv}
\end{equation}
where $\mathcal{T}_{2\vartheta}(\mathbf{x}, t)$, $\mathbf{x} \in \mathbb{R}^n$, $t>0$ is the law of the isotropic stable L\'evy process $\mathbf{S}_{2\vartheta}(t)$, $t>0$; $h_\alpha(x, t)$, $x \in [0, +\infty)$, $t>0$, is the law of the stable subordinator $\mathfrak{H}^\alpha_t$, $t>0$; $l_\beta(x, t)$, $x \in (0, +\infty)$, $t>0$, is the law of the inverse process $\mathfrak{L}^\beta_t$, $t>0$.   
\end{te}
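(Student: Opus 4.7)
The plan is to work entirely on the Fourier side in $\mathbf{x}$, reduce the problem to a fractional relaxation ODE in $t$, and then recognize the resulting expression as the triple integral in \eqref{solDifAdv}.

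First I would take the Fourier transform of \eqref{eqDifAdv}. Using the symbol \eqref{furFLAP} for $(-\triangle)^\vartheta$ and Lemma \ref{lemmaFurDirDer} for $(\mathbf{a}\cdot\nabla)^\alpha$, the PDE turns into the scalar equation
\begin{equation*}
\frac{\partial^\beta \widehat{w}}{\partial t^\beta}(\boldsymbol{\xi},t) = -\Big[\|\boldsymbol{\xi}\|^{2\vartheta} + (-i\,\mathbf{a}\cdot\boldsymbol{\xi})^\alpha\Big]\,\widehat{w}(\boldsymbol{\xi},t), \qquad \widehat{w}(\boldsymbol{\xi},0)=1.
\end{equation*}
Since the Mittag-Leffler function is an eigenfunction of the Dzerbayshan-Caputo derivative (cf.\ \eqref{relaxEq}), the unique solution is
\begin{equation*}
\widehat{w}(\boldsymbol{\xi},t) = E_\beta\!\left(-t^\beta\big[\|\boldsymbol{\xi}\|^{2\vartheta} + (-i\,\mathbf{a}\cdot\boldsymbol{\xi})^\alpha\big]\right).
\end{equation*}

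Second I would apply the subordination identity \eqref{lapLspace}, i.e.\ $E_\beta(-t^\beta \lambda)=\int_0^\infty e^{-\lambda z}\,l_\beta(z,t)\,dz$, to split the characteristic function as
\begin{equation*}
\widehat{w}(\boldsymbol{\xi},t) = \int_0^\infty e^{-z\|\boldsymbol{\xi}\|^{2\vartheta}}\,e^{-z(-i\,\mathbf{a}\cdot\boldsymbol{\xi})^\alpha}\,l_\beta(z,t)\,dz.
\end{equation*}
The factor $e^{-z\|\boldsymbol{\xi}\|^{2\vartheta}}$ is the Fourier transform of $\mathcal{T}_{2\vartheta}(\cdot,z)$, and by Remark \ref{RemfurRemH} the factor $e^{-z(-i\,\mathbf{a}\cdot\boldsymbol{\xi})^\alpha}=\widehat{h_\alpha}(\mathbf{a}\cdot\boldsymbol{\xi},z)$ is the Fourier transform in $\mathbf{s}\in\mathbb{R}^n_+$ of the map $\mathbf{s}\mapsto h_\alpha(\mathbf{a}\cdot\mathbf{s},z)$. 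Invoking the convolution theorem for each fixed $z$, inverting the Fourier transform, and integrating against $l_\beta(z,t)\,dz$ produces exactly \eqref{solDifAdv}. Probabilistically this amounts to conditioning on $\mathfrak{L}^\beta_t=z$ and adding the independent isotropic stable vector $\mathbf{S}_{2\vartheta}(z)$ to the one-dimensional subordinated drift $\mathbf{a}\,\mathfrak{H}^\alpha_z$, which identifies $w(\mathbf{x},t)$ as the law of $\mathbf{W}(t)=\mathbf{S}_{2\vartheta}(\mathfrak{L}^\beta_t)+\mathbf{a}\,\mathfrak{H}^\alpha_{\mathfrak{L}^\beta_t}$.

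The main obstacle I expect is in step two: the object $h_\alpha(\mathbf{a}\cdot\mathbf{s},z)$ is not a genuine density on $\mathbb{R}^n$ but a distribution supported on the ray $\{s\mathbf{a}:s\geq 0\}$, so the identification of $\widehat{h_\alpha}(\mathbf{a}\cdot\boldsymbol{\xi},z)$ as its $n$-dimensional Fourier transform has to be read in the distributional sense. I would handle this exactly as in Remark \ref{RemfurRemH}: using $e^{-s(\mathbf{a}\cdot\nabla)}\delta(\mathbf{x})=\delta(\mathbf{x}-s\mathbf{a})$ together with the equivalence $\mathbf{x}=s\mathbf{a}\iff \mathbf{a}\cdot\mathbf{x}=s$ (valid since $\|\mathbf{a}\|=1$), so that the inner convolution in \eqref{solDifAdv} collapses correctly against $\mathcal{T}_{2\vartheta}(\cdot,z)$. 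The remaining technical point is a Fubini argument to interchange the $z$-integration with the Fourier inversion, which is justified by the fact that $E_\beta(-t^\beta\lambda)$ decays in $\lambda$ and $l_\beta(\cdot,t)$ is a probability density, ensuring absolute convergence of the double integral.
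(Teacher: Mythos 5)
Your proposal is correct and follows essentially the same route as the paper: Fourier transform in space using \eqref{furFLAP} and the symbol $(-i\,\mathbf{a}\cdot\boldsymbol{\xi})^\alpha$, the Mittag-Leffler eigenfunction property, the subordination identity \eqref{lapLspace}, and the identification of $e^{-z(-i\,\mathbf{a}\cdot\boldsymbol{\xi})^\alpha}$ as the transform of $\mathbf{s}\mapsto h_\alpha(\mathbf{a}\cdot\mathbf{s},z)$ via Remark \ref{RemfurRemH}. The only difference is direction --- you solve the transformed equation forward and then invert, whereas the paper computes the Fourier transform of the stated formula \eqref{solDifAdv} and checks that it satisfies \eqref{coincFurDifAdv} --- and your caveat about the distributional reading of $h_\alpha(\mathbf{a}\cdot\mathbf{s},z)$ on the ray $\{s\mathbf{a}:s\geq 0\}$ is handled at exactly the same level of rigor as the paper's own treatment.
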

\begin{proof}
First, from formulae \eqref{deffracdirder} and \eqref{opRule1}, we evaluate the Fourier transform of a function $u \in L^1(\mathbb{R}^n)$ given by
\begin{align}
\int_{\mathbb{R}^n} e^{i \boldsymbol{\xi} \cdot \mathbf{x}} (\mathbf{a}\cdot \nabla)^\alpha u(\mathbf{x})d\mathbf{x} = & \int_{\mathbb{R}^n} e^{i \boldsymbol{\xi} \cdot \mathbf{x}} \frac{(\mathbf{a}\cdot \nabla)}{\Gamma(1-\alpha)} \int_{0}^\infty ds\, s^{-\alpha} e^{-s (\mathbf{a}\cdot \nabla)} u(\mathbf{x})d\mathbf{x}\notag \\
= & \frac{(-i\mathbf{a}\cdot \xi)}{\Gamma(1-\alpha)} \int_{0}^\infty ds\, s^{-\alpha} \int_{\mathbb{R}^n} e^{i \boldsymbol{\xi} \cdot \mathbf{x}} e^{-s (\mathbf{a}\cdot \nabla)} u(\mathbf{x})d\mathbf{x}\notag \\
= & \frac{(-i\mathbf{a}\cdot \xi)}{\Gamma(1-\alpha)} \int_{0}^\infty ds\, s^{-\alpha} \int_{\mathbb{R}^n} e^{i \boldsymbol{\xi} \cdot \mathbf{x}} u(\mathbf{x}-s\mathbf{a}) d\mathbf{x}\notag \\
= & \frac{(-i\mathbf{a}\cdot \xi)}{\Gamma(1-\alpha)} \int_{0}^\infty ds\, s^{-\alpha} e^{i \, s \, \mathbf{a}\cdot \boldsymbol{\xi}} \int_{\mathbb{R}^n} e^{i \boldsymbol{\xi} \cdot \mathbf{x}} u(\mathbf{x}) d\mathbf{x}\notag \\
= & (-i\mathbf{a}\cdot \xi)^\alpha \widehat{u}(\boldsymbol{\xi})\label{furAnalphaR}
\end{align}
(where we used \eqref{LapdistrZ}) which is in accord with \eqref{furAnalpha} for functions in $L^1(\mathbb{R}^n_+)$. From \eqref{furFLAP} and \eqref{furAnalphaR}, by passing to the Fourier transform, the equation \eqref{eqDifAdv} takes the form
\begin{equation}
\frac{\partial^\beta \widehat{w}}{\partial t^\beta}(\boldsymbol{\xi}, t) = - \| \boldsymbol{\xi} \|^{2\vartheta} \widehat{w}(\boldsymbol{\xi}, t) - (-i \mathbf{a}\cdot \boldsymbol{\xi})^\alpha \widehat{w}(\boldsymbol{\xi}, t). \label{coincFurDifAdv}
\end{equation}
From the fact that
\begin{equation*}
\int_{\mathbb{R}^n} e^{i\boldsymbol{\xi} \cdot \mathbf{x}}  \mathcal{T}_{2\vartheta}(\mathbf{x} - \mathbf{s}, z)\, d\mathbf{x} = e^{i \boldsymbol{\xi}\cdot \mathbf{s}} \int_{\mathbb{R}^n} e^{i\boldsymbol{\xi} \cdot \mathbf{x}}  \mathcal{T}_{2\vartheta}(\mathbf{x} , z)\, d\mathbf{x}= e^{i \boldsymbol{\xi}\cdot \mathbf{s}} \,  e^{-z \| \boldsymbol{\xi} \|^{2\vartheta}}
\end{equation*}
we get that
\begin{align*}
\widehat{w}(\boldsymbol{\xi}, t) = & \int_{\mathbb{R}^n} e^{i\boldsymbol{\xi} \cdot \mathbf{x}} w(\mathbf{x}, t)\, d\mathbf{x}\\
= & \int_0^\infty dz  \, e^{-z \| \boldsymbol{\xi} \|^{2\vartheta}} \int_{\mathbb{R}^n_+} d\mathbf{s} \, e^{i \boldsymbol{\xi}\cdot \mathbf{s}} \, h_\alpha(\mathbf{a}\cdot \mathbf{s}, z)\, l_\beta(z, t)
\end{align*}
From Remark \ref{RemfurRemH} (formula \eqref{furRemH} in particular) we have that
\begin{align*}
\widehat{w}(\boldsymbol{\xi}, t) = & \int_0^\infty dz  \, e^{-z \| \boldsymbol{\xi} \|^{2\vartheta}} \, e^{-z(-i \mathbf{a}\cdot \boldsymbol{\xi})^\alpha}\, l_\beta(z, t)
\end{align*}
and, from \eqref{lapLspace}, 
\begin{equation}
\widehat{w}(\boldsymbol{\xi}, t) = E_{\beta}\left(-t^\beta  \|\boldsymbol{\xi} \|^{2\vartheta} -t^\beta (-i \mathbf{a}\cdot \boldsymbol{\xi})^\alpha  \right). \label{fursolDifAdv}
\end{equation}
From the fact that 
\begin{equation*}
\frac{\partial^\beta E_\beta}{\partial t^\beta}(-t^\beta \zeta) = -\zeta\, E_\beta(-t^\beta \zeta), \quad \zeta >0
\end{equation*}
which means that the Mittag-Leffler is an eigenfunction for the Dzerbayshan-Caputo time-fractional derivative, we arrive at 
\begin{equation*}
\frac{\partial^\beta \widehat{w}}{\partial t^\beta}(\boldsymbol{\xi}, t) = -\left( \|\boldsymbol{\xi} \|^{2\vartheta} + (-i \mathbf{a}\cdot \boldsymbol{\xi})^\alpha  \right) \, \widehat{w} (\boldsymbol{\xi}, t) 
\end{equation*}
which coincides with \eqref{coincFurDifAdv}. This concludes the proof.
\end{proof}

\begin{os}
\normalfont
The distribution \eqref{solDifAdv} can be regarded as the law of a subordinated $\mathbb{R}^n$-valued stable process with stable subordinated  drift given by
\begin{equation*}
\mathbf{W}(t) = \mathbf{S}_{2\vartheta}(\mathfrak{L}^\beta_t) + \mathbf{a}\, \mathfrak{H}^\alpha_{\mathfrak{L}^\beta_t}, \quad t>0.
\end{equation*} 
The characteristic function is given by
\begin{align*}
\mathbb{E}\, e^{i \boldsymbol{\xi}\cdot \mathbf{W}(t)} = & \mathbb{E}\left[ \mathbb{E}\, e^{i \boldsymbol{\xi}\cdot \mathbf{S}_{2\vartheta}(T) + i \boldsymbol{\xi}\cdot \mathbf{a}\,\mathfrak{H}^\alpha_T}\Bigg| T=\mathfrak{L}^\beta_t \right]\\
=&  \mathbb{E}\left[ e^{-T \| \boldsymbol{\xi} \|^{2\vartheta}} \mathbb{E} \, e^{-(-i \boldsymbol{\xi}\cdot \mathbf{a}) \, \mathfrak{H}^\alpha_T} \Bigg| T=\mathfrak{L}^\beta_t \right]\\
= & (\textrm{by } \eqref{lapHspace} )\\
= & \mathbb{E}\left[ e^{-T \| \boldsymbol{\xi} \|^{2\vartheta} - T (-i \boldsymbol{\xi}\cdot \mathbf{a})^\alpha } \Bigg| T=\mathfrak{L}^\beta_t \right]\\ 
= & \mathbb{E} \, e^{-(\| \boldsymbol{\xi} \|^{2\vartheta} + (-i \boldsymbol{\xi}\cdot \mathbf{a})^\alpha )\mathfrak{L}^\beta_t}\\
= & (\textrm{by } \eqref{lapLspace} )\\
= &E_\beta \left(-t^\beta \, \| \boldsymbol{\xi} \|^{2\vartheta} - t^\beta \, (-i \boldsymbol{\xi}\cdot \mathbf{a})^\alpha \right)
\end{align*}
which coincides with \eqref{fursolDifAdv}.
\end{os}

\begin{os}
\normalfont
We remark that, for $\alpha=\beta$, the stochastic solution to \eqref{eqDifAdv} becomes
\begin{equation*}
\mathbf{S}_{2\vartheta}(\mathfrak{L}^\beta_t) + \mathbf{a}\, t \, \frac{{_1\mathfrak{H}^\beta}}{{_2\mathfrak{H}^\beta}} , \quad t>0.
\end{equation*} 
where the ratio of two independent stable subordinators ${_j\mathfrak{H}^\beta}$, $j=1,2$, is independent from $t$ and possesses distribution 
\begin{equation}
Pr\{ {_1\mathfrak{H}^\beta}/ {_2\mathfrak{H}^\beta} \in dx \}/dx = \frac{\sin \beta \pi}{\pi} \frac{x^{\beta -1}}{x^{2\beta} + 2x^\beta \cos \beta \pi + 1}, \quad x\geq 0, \; t>0. \label{ratioSSdist}
\end{equation}
This has been pointed out also in Remark \ref{Remaratio}.
\end{os}

\begin{os}
\normalfont
For $\vartheta=1$ in \eqref{eqDifAdv}, we immediately have that the fractional equation
\begin{equation*}
\frac{\partial^\beta w}{\partial t^\beta} = \triangle w - (\mathbf{a}\cdot \nabla)^\alpha w
\end{equation*}
has a solution which is the law of the process
\begin{equation*}
\mathbf{B}(\mathfrak{L}^\beta_t) + \mathbf{a}\, \mathfrak{H}^\alpha_{\mathfrak{L}^\beta_t}, \quad t>0
\end{equation*}
($\mathbf{B}$ is a  Brownian motion) and can be written as follows
\begin{equation*}
w(\mathbf{x}, t) = \int_0^\infty dz \int_{\mathbb{R}^n_+} d\mathbf{s} \,\frac{e^{-\frac{\|\mathbf{x} - \mathbf{s}\|^2}{4z}}}{\sqrt{4\pi z}} h_\alpha(\mathbf{a}\cdot \mathbf{s}, z)\, l_\beta(z, t).
\end{equation*}
For $\alpha \to 1$ we have that $\mathfrak{H}^\alpha_t \to t$ which is the elementary subordinator (obviously $h_\alpha(x, t) \to \delta(x-s)$) and therefore the solution to
\begin{equation*}
\frac{\partial^\beta w}{\partial t^\beta} = \triangle w - (\mathbf{a}\cdot \nabla)w
\end{equation*}
is written as
\begin{align*}
w(\mathbf{x}, t) = & \int_0^\infty dz \int_{\mathbb{R}^n_+} d\mathbf{s} \,\frac{e^{-\frac{\|\mathbf{x} - \mathbf{s}\|^2}{4z}}}{\sqrt{4\pi z}} \delta(\mathbf{a}\cdot \mathbf{s}- z)\, l_\beta(z, t)\\
= &  \int_0^\infty dz \int_{\mathbb{R}^n_+} d\mathbf{s} \,\frac{e^{-\frac{\|\mathbf{x} - \mathbf{s}\|^2}{4z}}}{\sqrt{4\pi z}} \delta(\mathbf{s}- \mathbf{a}z)\, l_\beta(z, t)
\end{align*}
where we used the fact that $\mathbf{a}\cdot \mathbf{s}=z \in \mathbb{R}_{+}$ iff $\mathbf{s}= \mathbf{a}z$ or, equivalently
\begin{align*}
w(\mathbf{x}, t) = & \int_0^\infty dz \int_{\mathbb{R}^n_+} d\mathbf{s} \,\frac{e^{-\frac{\|\mathbf{x} - \mathbf{s}\|^2}{4z}}}{\sqrt{4\pi z}} \delta(\mathbf{a}\cdot \mathbf{s}- z)\, l_\beta(z, t)\\
= & \int_{\mathbb{R}^n_+} d\mathbf{s} \,\frac{e^{-\frac{\|\mathbf{x} - \mathbf{s}\|^2}{4 (\mathbf{a}\cdot \mathbf{s})}}}{\sqrt{4\pi (\mathbf{a}\cdot \mathbf{s})}} l_\beta(\mathbf{a}\cdot \mathbf{s}, t).
\end{align*}
Finally, for $\vartheta=1$ and $\alpha=1$, we get that 
\begin{align*}
w(\mathbf{x}, t) = &  \int_0^\infty dz  \,\frac{e^{-\frac{\|\mathbf{x} - \mathbf{a}z\|^2}{4z}}}{\sqrt{4\pi z}}\, l_\beta(z, t)
\end{align*}
is the law of the $n$-dimensional subordinated Brownian motion with subordinated drift
\begin{equation*}
\mathbf{B}(\mathfrak{L}^\beta_t) + \mathbf{a}\, \mathfrak{L}^\beta_t, \quad t>0.
\end{equation*}
If $\beta \to 1$, then $\mathfrak{L}^\beta_t \to t$ and $l_\beta(x, t)\to \delta(x, t)$. The solution to
\begin{equation*}
\frac{\partial w}{\partial t} = \triangle w - (\mathbf{a}\cdot \nabla)w
\end{equation*}
is given by
\begin{equation*}
w(\mathbf{x}, t) = \frac{e^{-\frac{\|\mathbf{x} - \mathbf{a}t\|^2}{4t}}}{\sqrt{4\pi t}}
\end{equation*}
which is the law of
\begin{equation*}
\mathbf{B}(t) + \mathbf{a}\, t, \quad t>0.
\end{equation*}
\end{os}

\begin{os}
\normalfont
For the sake of completeness we also observe that, for $\alpha=1$, equation \eqref{eqDifAdv} becomes
\begin{equation}
\frac{\partial^\beta u}{\partial t^\beta} = - (-\triangle)^\vartheta u - (\mathbf{a}\cdot \nabla)u
\end{equation} 
whose stochastic solution is given by the subordinated stable process with drift
\begin{equation}
\mathbf{S}_{2\vartheta}(\mathfrak{L}^\beta_t) + \mathbf{a} \mathfrak{L}^\beta_t, \quad t>0.
\end{equation}
Indeed, for $\alpha \to 1$, we have that $\mathfrak{H}^\alpha_t \to t$.
\end{os}

\begin{os}
\normalfont
We remark that (\cite{SzWe02,Traple96}) the transport equation 
\begin{equation*}
\frac{\partial u}{\partial t} = Au - \lambda u + \lambda Ku
\end{equation*}
where 
$$Au=- \sum_{k=1}^n \frac{\partial}{\partial x_k} \left( a(x)\, u \right)$$
and $K$ is the Frobenius-Perron operator corresponding to the transformation $T(x)=x+f(x)$ has a solution which is the law of the solution to the Poisson ($\mathbf{N}_t$) driven stochastic differential equation  
\begin{equation}
d\mathbf{X}_t = a(\mathbf{X}_t)dt + f(\mathbf{X}_t)d\mathbf{N}_t.
\end{equation}
\end{os}

\begin{os}
\normalfont
We recall that (see for example \citet{OB09EJP}) the fractional Poisson process $\mathcal{N}^\beta_t = N(\mathfrak{L}^\beta_t)$ is a renewal process with 
\begin{equation*}
Pr\{ \mathcal{N}^\beta_t = k \} = p_k^\beta(t) = \mathbb{E} p_k(\mathfrak{L}^\beta_t)
\end{equation*}
where $p_k(t)$ is the distribution of the Poisson process $N(t)$, $t>0$ and probability generating function written as
\begin{equation}
\mathbb{E} z^{\mathcal{N}^\beta_t} = E_\beta\left(-\lambda (1-z)t^\beta \right).\label{Poispk}
\end{equation}
From \eqref{Poispk} and \eqref{relaxEq} we can write
\begin{equation}
\frac{\partial^\beta}{\partial t^\beta}p_k^\beta(t) = -\lambda \left(p_k^\beta(t) - p^\beta_{k-1}(t) \right)
\end{equation}
Furthermore, we recall that
\begin{equation}
\mathbb{E} z^{N(t)} = e^{-\lambda t (1-z)}
\end{equation}
and
\begin{equation}
\mathbb{E} e^{-i\xi N(t)} = \exp \left(- \xi (1-e^{-i\xi}) \right).
\end{equation}
\end{os}

\begin{te}
The stochastic solution to 
\begin{equation}
\frac{\partial^\beta  \mathsf{w}}{\partial t^\beta}(\mathbf{x}, t) = - \Big( (\mathbf{a}\cdot \nabla)^\alpha + \lambda \left( I - e^{-\mathbf{1}\cdot \nabla} \right) \Big)  \mathsf{w}(\mathbf{x}, t), \quad (\mathbf{x}, t) \in \mathbb{R}^n_+ \times (0, +\infty) \label{eqpoisProc}
\end{equation}
where $\alpha, \beta \in (0, 1]$ and $e^{-\mathbf{1}\cdot \nabla} \mathsf{w}(\mathbf{x}, t) =  \mathsf{w}(\mathbf{x}-\mathbf{1}, t)$ is the shift operator, is the process
\begin{equation}
\mathbf{Y}_t =  \mathbf{N}(\mathfrak{L}^\beta_t) + \mathbf{a}\, \mathfrak{H}^\alpha_{\mathfrak{L}^\beta_t}, \quad t>0.\label{poisProc}
\end{equation}
\end{te}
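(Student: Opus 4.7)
The plan is to pass to the Fourier transform, identify $\widehat{\mathsf{w}}(\boldsymbol{\xi}, t)$ as a Mittag-Leffler function, and then recognize it as the characteristic function of $\mathbf{Y}_t$. First, I would apply Lemma \ref{lemmaFurDirDer}, which gives $\widehat{(\mathbf{a}\cdot \nabla)^\alpha \mathsf{w}}(\boldsymbol{\xi}, t) = (-i\,\mathbf{a}\cdot\boldsymbol{\xi})^\alpha \widehat{\mathsf{w}}(\boldsymbol{\xi}, t)$, together with the operational rule \eqref{opRule1} to evaluate
\begin{equation*}
\widehat{e^{-\mathbf{1}\cdot \nabla} \mathsf{w}}(\boldsymbol{\xi}, t) = \int_{\mathbb{R}^n} e^{i\boldsymbol{\xi}\cdot \mathbf{x}} \mathsf{w}(\mathbf{x}-\mathbf{1}, t)\, d\mathbf{x} = e^{i\mathbf{1}\cdot \boldsymbol{\xi}}\, \widehat{\mathsf{w}}(\boldsymbol{\xi}, t).
\end{equation*}
Thus, under the initial datum $\mathsf{w}(\mathbf{x},0)=\delta(\mathbf{x})$, equation \eqref{eqpoisProc} becomes in Fourier space
\begin{equation*}
\frac{\partial^\beta \widehat{\mathsf{w}}}{\partial t^\beta}(\boldsymbol{\xi}, t) = - \Big( (-i\,\mathbf{a}\cdot\boldsymbol{\xi})^\alpha + \lambda\bigl(1 - e^{i\mathbf{1}\cdot \boldsymbol{\xi}}\bigr) \Big)\, \widehat{\mathsf{w}}(\boldsymbol{\xi}, t),
\end{equation*}
and since $E_\beta$ is an eigenfunction of the Dzerbayshan-Caputo fractional derivative (as in \eqref{relaxEq}) I would conclude that
\begin{equation*}
\widehat{\mathsf{w}}(\boldsymbol{\xi}, t) = E_\beta\Big( -t^\beta(-i\,\mathbf{a}\cdot\boldsymbol{\xi})^\alpha - t^\beta \lambda \bigl(1-e^{i\mathbf{1}\cdot \boldsymbol{\xi}}\bigr) \Big).
\end{equation*}

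Next, I would compute the characteristic function of $\mathbf{Y}_t = \mathbf{N}(\mathfrak{L}^\beta_t) + \mathbf{a}\,\mathfrak{H}^\alpha_{\mathfrak{L}^\beta_t}$ by conditioning on $T = \mathfrak{L}^\beta_t$ and using the independence of $\mathbf{N}$ and $\mathfrak{H}^\alpha$. Interpreting $\mathbf{N}_t = \mathbf{1}\, N(t)$ consistently with the shift $K = e^{-\mathbf{1}\cdot\nabla}$, a standard Poisson computation gives $\mathbb{E} e^{i\boldsymbol{\xi}\cdot \mathbf{N}(T)} = \exp(-\lambda T(1-e^{i\mathbf{1}\cdot \boldsymbol{\xi}}))$, while \eqref{lapHspace} yields $\mathbb{E} e^{i\boldsymbol{\xi}\cdot \mathbf{a}\,\mathfrak{H}^\alpha_T} = \exp(-T(-i\,\mathbf{a}\cdot \boldsymbol{\xi})^\alpha)$. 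Assembling these and then using \eqref{lapLspace} for the outer expectation with respect to $\mathfrak{L}^\beta_t$ produces
\begin{equation*}
\mathbb{E} e^{i\boldsymbol{\xi}\cdot \mathbf{Y}_t} = \mathbb{E}\, \exp\Big(-\mathfrak{L}^\beta_t \bigl[(-i\,\mathbf{a}\cdot\boldsymbol{\xi})^\alpha + \lambda(1-e^{i\mathbf{1}\cdot\boldsymbol{\xi}})\bigr]\Big) = E_\beta\Big(-t^\beta(-i\,\mathbf{a}\cdot\boldsymbol{\xi})^\alpha -t^\beta \lambda(1-e^{i\mathbf{1}\cdot \boldsymbol{\xi}})\Big),
\end{equation*}
which matches $\widehat{\mathsf{w}}(\boldsymbol{\xi}, t)$ and identifies $\mathsf{w}(\mathbf{x}, t)$ as the law of $\mathbf{Y}_t$.

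The main obstacle, in my view, is not the Fourier-side calculation itself but justifying that the symbol $\lambda(1-e^{i\mathbf{1}\cdot \boldsymbol{\xi}})$ on the Fourier side indeed corresponds to $\lambda(I-K)$ with $K=e^{-\mathbf{1}\cdot\nabla}$ on a class of functions large enough to include the fundamental solution; this rests on extending the operational rule \eqref{opRule1} in the sense of tempered distributions, along the lines already invoked by the author after \eqref{opRule1}. Once that is in place, the Mittag-Leffler eigenfunction property and the Laplace transforms \eqref{lapHspace}--\eqref{lapLspace} close the argument.
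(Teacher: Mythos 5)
Your proposal is correct and follows essentially the same route as the paper: Fourier transform of both operators via Lemma \ref{lemmaFurDirDer} and the shift rule, the Mittag-Leffler eigenfunction property to identify $\widehat{\mathsf{w}}(\boldsymbol{\xi},t)=E_\beta\left(-t^\beta(-i\,\mathbf{a}\cdot\boldsymbol{\xi})^\alpha-t^\beta\lambda(1-e^{i\mathbf{1}\cdot\boldsymbol{\xi}})\right)$, and the conditional characteristic-function computation for $\mathbf{Y}_t$ using \eqref{lapHspace} and \eqref{lapLspace}; the paper merely runs the argument in the opposite order, starting from $\mathbb{E}e^{i\boldsymbol{\xi}\cdot\mathbf{Y}_t}$ and verifying it satisfies the transformed equation. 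Your explicit reading $\mathbf{N}_t=\mathbf{1}\,N(t)$, giving the symbol $e^{i\mathbf{1}\cdot\boldsymbol{\xi}}$, is the correct interpretation of the paper's abbreviated $e^{i\boldsymbol{\xi}}$.
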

\begin{proof}
We have that
\begin{align*}
\mathbb{E}e^{i\boldsymbol{\xi} \mathbf{Y}_t} = &  \mathbb{E} \exp\left( - \lambda (1-e^{i\boldsymbol{\xi}})\mathfrak{L}^\beta_t + i \mathbf{a}\cdot \boldsymbol{\xi}\, \mathfrak{H}^\alpha_{\mathfrak{L}^\beta_t} \right)\\
= & \mathbb{E}\exp\left( - \lambda (1-e^{i\boldsymbol{\xi}})\mathfrak{L}^\beta_t - (- i \mathbf{a}\cdot \boldsymbol{\xi})^\alpha \, \mathfrak{L}^\beta_t \right)\\
= & E_\beta \left( - \lambda (1-e^{i\boldsymbol{\xi}})t^\beta - (- i \mathbf{a}\cdot \boldsymbol{\xi})^\alpha \, t^\beta \right)\\
= & \widehat{ \mathsf{w}}(\boldsymbol{\xi}, t)
\end{align*}
is the characteristic function of \eqref{poisProc}. From \eqref{relaxEq} we obtain that
\begin{equation*}
\frac{\partial^\beta \widehat{ \mathsf{w}}}{\partial t^\beta}(\boldsymbol{\xi}, t) =  \left( - \lambda (1-e^{i\boldsymbol{\xi}}) - (- i \mathbf{a}\cdot \boldsymbol{\xi})^\alpha  \right) \widehat{ \mathsf{w}}(\boldsymbol{\xi}, t)
\end{equation*}
where, from Lemma \ref{lemmaFurDirDer}, $\widehat{(\mathbf{a}\cdot \nabla)^\alpha  \mathsf{w}}(\boldsymbol{\xi}, t) = (- i \mathbf{a}\cdot \boldsymbol{\xi})^\alpha \widehat{ \mathsf{w}}(\boldsymbol{\xi}, t)$ and
\begin{align*}
\int_{\mathbb{R}^n} e^{i\boldsymbol{\xi}\cdot \mathbf{x}} \lambda \left( I - e^{-\mathbf{1}\cdot \nabla} \right)  \mathsf{w}(\mathbf{x}, t)\, d\mathbf{x} = & \lambda \left(1 - \int_{\mathbb{R}^n} e^{i\boldsymbol{\xi}\cdot \mathbf{x}}   \mathsf{w}(\mathbf{x}- \mathbf{1}, t)\, d\mathbf{x} \right) \\
= & \lambda \left(1 - e^{i \boldsymbol{\xi}}\right) \widehat{ \mathsf{w}}(\boldsymbol{\xi}, t).
\end{align*}
This shows that \eqref{poisProc} is the solution to the Poisson driven stochastic differential equation whose density law solves \eqref{eqpoisProc}. 
\end{proof}

\begin{os}
\normalfont
Let us consider the equation
\begin{equation}
\frac{\partial^\beta  \mathsf{w}}{\partial t^\beta}(x, t) = - \Big( \partial_x^\alpha + \lambda \frac{\left( I - e^{-\tau \partial_x} \right)}{\tau} \Big)  \mathsf{w}(x, t), \quad (x, t) \in \mathbb{R}_+ \times (0, +\infty) \label{eqpoisProc2}
\end{equation}
which coincides, for $\tau=1$, with \eqref{eqpoisProc} in the one-dimensional case and can be rewritten, for $\tau \neq 0$, as follows
\begin{equation}
\left( \frac{\partial^\beta }{\partial t^\beta} + \partial^\alpha_x \right) \mathsf{w}(x, t) = -\frac{\lambda}{\tau}\left(  \mathsf{w}(x, t) -  \mathsf{w}(x-\tau, t) \right). \label{eqpoisProc3}
\end{equation}
Formula \eqref{eqpoisProc3} is the governing equation of the one-dimensional process
\begin{equation}
Y_t=\tau N(\tau^{-1} \mathfrak{L}^\beta_t) + \mathfrak{H}^\alpha_{\mathfrak{L}^\beta_t}, \quad t>0.
\end{equation}
For $\tau \to 0$, we obtain that
\begin{equation}
\left( \frac{\partial^\beta }{\partial t^\beta} + \partial^\alpha_x \right) \mathsf{w}(x, t) = -\lambda \partial_x  \mathsf{w}(x, t) \label{eqpoisProc4}
\end{equation}
is the governing equation of 
\begin{equation}
Y_t= \lambda t + \mathfrak{H}^\alpha_{\mathfrak{L}^\beta_t}, \quad t>0
\end{equation}
which becomes, for $\alpha = \beta \in (0, 1)$,
\begin{equation}
Y_t= \left( \lambda + \frac{{_1\mathfrak{H}^\alpha_t}}{{_2 \mathfrak{H}^\alpha_t}} \right) t, \quad t>0
\end{equation}
where the ratio of independent stable subordinators has distribution \eqref{ratioSSdist}.
\end{os}

\end{document}